\newcommand{\scal}[2]{\langle{{#1},{#2}}\rangle}
\newcommand{\RR}{\ensuremath{\mathbb R}}
\newcommand{\RP}{\ensuremath{\RR_+}}
\newcommand{\RM}{\ensuremath{\RR_-}}
\newcommand{\RX}{\ensuremath{\,\left]-\infty,+\infty\right]}}
\newcommand{\menge}[2]{\big\{{#1} \mid {#2}\big\}}
\newcommand{\gra}{\ensuremath{\operatorname{gra}}}
\newcommand{\spa}{\ensuremath{\operatorname{span}}}
\newcommand{\sym}{\ensuremath{\operatorname{sym}}}
\newcommand{\Id}{\ensuremath{\operatorname{Id}}}
\newcommand{\pinf}{\ensuremath{+\infty}}
\renewcommand{\phi}{\ensuremath{\varphi}}
\newtheorem{theorem}{Theorem}[section]
\newtheorem{lemma}[theorem]{Lemma}
\newtheorem{fact}[theorem]{Fact}
\newtheorem{definition}[theorem]{Definition}
\theoremstyle{plain}{\theorembodyfont{\rmfamily}
}
\theoremstyle{plain}{\theorembodyfont{\rmfamily}
}
\theoremstyle{plain}{\theorembodyfont{\rmfamily}
}
\theoremstyle{plain}{\theorembodyfont{\rmfamily}
\newtheorem{example}[theorem]{Example}}
\theoremstyle{plain}{\theorembodyfont{\rmfamily}
\newtheorem{remark}[theorem]{Remark}}
\theoremstyle{plain}{\theorembodyfont{\rmfamily}
}
\newcommand{\pluss}{{\hskip1pt \raise1pt\vbox{\hrule width6pt \vskip1pt
\hrule width6pt}\kern-4pt{\lower1pt\hbox{\vrule height6pt \kern1pt\vrule
height6pt}}\hskip5pt}}
\begin{document}

\title{\textsc
	A class of multi-marginal $c$-cyclically monotone sets\\
	with explicit $c$-splitting potentials}

\author{
	Sedi Bartz\thanks{
		Mathematics, University
		of British Columbia,
		Kelowna, B.C.\ V1V~1V7, Canada. E-mail:
		\texttt{sedi.bartz@ubc.ca}.},~~
	Heinz H.\ Bauschke\thanks{
		Mathematics, University
		of British Columbia,
		Kelowna, B.C.\ V1V~1V7, Canada. E-mail:
		\texttt{heinz.bauschke@ubc.ca}.}
	~~and Xianfu Wang\thanks{
		Mathematics, University of British Columbia, Kelowna, B.C.\
		V1V~1V7, Canada.
		E-mail: \texttt{shawn.wang@ubc.ca}.}}

\date{March 3, 2017}

\maketitle

\begin{abstract}
	\noindent
Multi-marginal optimal transport plans are concentrated on
$c$-splitting sets. It is known that, similar to the
two-marginal case, $c$-splitting sets are $c$-cyclically
monotone. Within a suitable framework, the converse implication
was very recently established by Griessler.  However, for
an arbitrary cost $c$, given a multi-marginal $c$-cyclically
monotone set, the question whether there exists an analogous
explicit construction to the one from the two-marginal case
of $c$-splitting potentials is still open.  When the margins
are one-dimensional and the cost belongs to a certain class,
Carlier proved that the two-marginal projections of a
$c$-splitting set are monotone. For arbitrary products of
sets equipped with cost functions which are sums of
two-marginal costs, we show that the two-marginal monotonicity
condition is a sufficient condition which does give rise
to an explicit construction of $c$-splitting potentials.
Our condition is, in principle, easier to verify than the
one of multi-marginal $c$-cyclic monotonicity. Various
examples illustrate our results.
We show that, in general, our condition is
sufficient; however, it is not necessary. On the other hand,
we conclude that when the margins are one-dimensional
equipped with classical cost functions, our condition is a
characterization of $c$-splitting sets and extends classical
convex analysis.
\end{abstract}
{\small
	\noindent
	{\bfseries 2010 Mathematics Subject Classification:}
	{Primary 49K30, 49N15; Secondary 26B25, 47H05, 52A01, 91B68.}
	
	\noindent {\bfseries Keywords:}
	$c$-convex, $c$-monotone, $c$-splitting functions, $c$-splitting set, cyclically monotone, 
	Monge-Kantorovich, multi-marginal, optimal transport.
}

\section{Introduction}

In the past decade multi-marginal optimal transport has attracted
considerable attention and is now a rapidly growing field of
research. Applications can be found in mathematical finance,
economics, image processing, tomography, statistics, decoupling
of PDEs, mathematical physics and more. Unified and
detailed accounts of multi-marginal optimal transport theory,
and recent developments and applications, can be found in the surveys
\cite{MGN, Pas2} and references therein. Naturally,
considerable effort is being invested into generalizing 
the much better understood and established optimal transport
theory in the two-marginal case. We focus our attention on an
issue of this flavour which underlies the very basic structure of
optimal transport. Let $(X_1,\mu_1),\ldots,(X_N,\mu_N)$ be
Borel probability spaces. We set $X=X_1\times\cdots\times X_N$ and we
denote by $\Pi(X)$ the set of all Borel probability measures $\pi$ on
$X$ such that the marginals of $\pi$ are the $\mu_i$'s. Let
$c:X\to\RR$ be a cost function. A cornerstone of multi-marginal
optimal transport theory is Kellerer's~\cite{Kel} generalization
of the Kantorovich duality theorem to the multi-marginal case
(recent generalizations of Kellerer's duality theorem are
accounted in the surveys mentioned above). Kellerer's duality
theorem asserts that, in a suitable framework,     
\begin{equation}\label{Kellerer}
\min_{\pi\in\Pi(X)} \int_X c(x)d\pi(x)=\max_{\begin{array}{c}
u_i\in L_1({\mu_i}),\\ 
\sum_{1\leq i\leq N}u_i\ \leq\ c	
	\end{array}} \ \ \sum_{1\leq i\leq N}\int_{X_i}u_i(x_i)d\mu_i(x_i).
\end{equation}
Furthermore, if $\pi$ is a solution of the left-hand side
of~\eqref{Kellerer} and $(u_1,\ldots,u_N)$ is a solution of the
right-hand side of~\eqref{Kellerer}, then $\pi$ is concentrated
on the subset $\Gamma$ of $X$ where the equality $c=\sum_{1\leq
i\leq N}u_i$ holds. In recent publications (see~\cite{KP, Gri,
MP}) such subsets $\Gamma$ of $X$ are referred to as $c$-splitting
sets
(see also Definition~\ref{splitting def} below). It was
observed (see, for example, \cite{KS, KP}) that $c$-splitting
sets are, in fact, $c$-cyclically monotone sets in the
multi-marginal sense (see the explicit
Definition~\ref{monotonicity definitions} and Fact~\ref{splitting
implies cyclic} below). Recent studies and applications of multi
marginal $c$-cyclic monotonicity and related concepts in the
framework of multi-marginal optimal transport include~\cite{GhMa,
GhMo, Pas2, MPC, BG}. In the two-marginal case it is well known
that a subset $\Gamma$ of $X$ is a $c$-splitting set if and only
if $\Gamma$ is $c$-cyclically monotone. Furthermore, given a
$c$-cyclically monotone set $\Gamma$, an explicit construction of
$c$-splitting potentials $(u_1,u_2)$, a generalization of
Rockafellar's explicit construction from classical convex
analysis (see Definition~\ref{Rock antider} and Fact~\ref{Rock}
below) is also well known. In this case $(u_1,u_2)$ is a solution
of the right hand side of~\eqref{Kellerer}. In fact, this
construction holds within the most general framework of
$c$-convexity theory and applies for general sets $X_1,\ X_2$ and
a general coupling (cost) function $c$.  Given additional
properties of $\Gamma$ and $c$, in the two-marginal case, other
explicit techniques, such as integration in $\RR^d$, can
sometimes be applied in order to produce a $c$-splitting solution
$(u_1,u_2)$. In the multi-marginal case $N\geq 3$, for general
sets $X_1\ldots,X_N$ and a general cost function $c:X\to\RR$,
given a $c$-cyclically monotone set $\Gamma\subseteq X$, it is an
interesting open question whether there exist $c$-splitting
potentials $(u_1,\ldots,u_N)$ of $\Gamma$. Very recently, within
a reasonable framework, existence was established by Griessler~\cite{Gri}
using topological arguments. However, even when existence is
known, an explicit construction is still not available and there
is no multi-marginal counterpart of the construction in Definition~\ref{Rock antider}. 

In this paper we focus our attention on a fairly general
and extensively studied class of cost functions $c$
(see~\eqref{our c} below) which are sums of two-marginal coupling
functions. We introduce a class of subsets $\Gamma$ of $X$ by
imposing a $c$-cyclical monotonicity condition on their
two-marginal projections (see~\eqref{Gammaij} below). In the case where $X_i=\RR$ for each $i$, for a certain class of cost functions $c$, Carlier~\cite{Car} established (see also Pass~\cite{Pas2}) that the monotonicity of the two-marginal projections of the set $\Gamma\subseteq X$ is a necessary condition on $\Gamma$ in order that it is a $c$-splitting set. For arbitrary sets $X_i$ equipped with a cost function from the class of our subject matter, we show that, in fact, this condition is sufficient for $\Gamma$ in order to be a $c$-splitting set. This
enables us to employ existing two-marginal solutions in order to
explicitly construct solutions to the multi-marginal problem of
finding $c$-splitting tuples $(u_i,\ldots,u_N)$ for a given set
$\Gamma$ satisfying our condition. Another advantage of our approach is
that, in principle, it is easier to verify that a given set
$\Gamma$ satisfies our condition than verifying that $\Gamma$
satisfies the more general condition of multi-marginal $c$-cyclic monotonicity (see Remark~\ref{easier condition} below). We then focus our attention on classical cost functions (see Definition~\ref{classic costs} below). We provide several examples of our construction and show that our condition on a given set $\Gamma$ is sufficient, however, it is not necessary in order to ensure that $\Gamma$ is $c$-cyclically monotone. More explicitly, we present a $c$-cyclically monotone set $\Gamma$ with an explicit $c$-splitting tuple $(u_1,\ldots,u_N)$ of $\Gamma$ which does not satisfy our condition. On the other hand, when we focus our attention further on classical cost functions with one-dimensional margins, by combining our discussion with the known results regarding the necessity of the two-marginal condition, we conclude a characterization of $c$-splitting sets. Thus, in this case, given any $c$-cyclically monotone set, using our technique, one can construct an explicit $c$-splitting tuple. 

The rest of the paper is organized as follows: In the reminder of
this section we collect necessary notations and conventions. In
Section~2, we discuss multi-marginal $c$-cyclically monotonicity for products of arbitrary sets and present our more particular class of $c$-cyclically monotone sets along with an explicit construction of $c$-splitting tuples. In Section~3 we review classical cost functions and present examples of our construction and of $c$-cyclically monotone sets which do not fit within our framework. Finally, in Section~4 we focus on classical cost functions with one-dimensional marginals and show that in this case our class of sets is precisely the class of $c$-cyclically monotone sets and generalize other characterizations from the two-marginal case.

Let $Y$ and $Z$ be sets. Given a function $f:Y\to\RX$, we say
that $f$ is proper if $f\not\equiv\pinf$. Given a multivalued mapping $M:Y\rightrightarrows Z$, we denote by $\gra(M)$ the graph of $M$, that is, $\gra(M)=\{(y,z)\in Y\times Z\ |\ z\in M(y)\}$. We will denote the identity mapping on a given set by $\Id$. 
Let $S$ be a subset of $Y$. 
The indicator function of $S$ is the function $\iota_S:Y\to\RX$ defined by
$$
\iota_S(y)=\begin{cases}
0,& \text{if $y\in S$;}\\ 
+\infty,&\text{if $y\notin S$.}
\end{cases} 
$$

Throughout our discussion, $N\geq 2$ is a natural number and $I=\{1,\ldots,N\}$ is an index set. 
Unless mentioned otherwise, $X_1,\ldots,X_N$ are arbitrary nonempty sets, $X=X_1\times\cdots\times X_N$ and $c:X\to\RR$ is a function. 
Set $P_i\colon X\to X_i\colon (x_1,\ldots,x_N)\mapsto x_i$ and 
$P_{i,j}\colon X\to X_i\times X_j\colon (x_1,\ldots,x_N)\mapsto
(x_i,x_j)$ for $i$ and $j$ in $\{1,\ldots,N\}$ and when $i<j$. 
Given a subset $\Gamma$ of $X$, 
we set 
\begin{equation}\label{gammai}
\Gamma_i= P_i(\Gamma)
\end{equation}
and 
\begin{equation}\label{Gammaij} 
\Gamma_{i,j}=P_{i,j}(\Gamma).
\end{equation}

Suppose momentarily that $N=2$. Given 
	a function $f_1:X_1\to[-\infty,+\infty]$, its
	$c$-conjugate function $f_1^c:X_2\to[-\infty,+\infty]$ is defined by
	\begin{equation*}
	f_1^c(x_2)=\sup_{x_1\in X_1}\
	\big(c(x_1,x_2)-f_1(x_1)\big),\ \ \ x_2\in X_2.
	\end{equation*}
	Similarly, the $c$-conjugate of 
	$f_2:X_2\to[-\infty,+\infty]$ is the function
	$f_2^c:X_1\to[-\infty,+\infty]$ defined by
	\begin{equation*}
	f_2^c(x_1)=\sup_{x_2\in X_2}\
	\big(c(x_1,x_2)-f_2(x_2)\big),\ \ \ x_1\in X_1.
	\end{equation*}
Clearly, for any function $f:X_1\to\RX$,
\begin{equation}\label{Young-Fenchel}
c(x_1,x_2)\leq f(x_1)+f^c(x_2)\ \ \ \mathrm{for\ all}\ (x_1,x_2)\in X.
\end{equation}
When $N\geq 3$, 
$c$-conjugation is also widely used in the multi-marginal optimal transport literature mentioned above; however, this will not be a part of our discussion. The case of equality in~\eqref{Young-Fenchel}  is captured in the definition of the $c$-subdifferential: Let $f:X_1\to\RX$ be a proper function. The $c$-subdifferential of $f$ is the mapping
	$\partial_c f:X_1\rightrightarrows X_2$ defined by
	\begin{align}
	\partial_c f(x_1)=\ &\big\{x_2\in X_2\ \big|\ f(x_1)+c(x'_1,x_2)\leq f(x'_1)+c(x_1,x_2)\ \ \forall x'_1\in X_1\big\}\nonumber\\
	\nonumber\\
	=\ &\big\{x_2\in X_2\ \big|\ f(x_1)+f^c(x_2)=c(x_1,x_2)\big\}.\label{equality in YF}
	\end{align}
When $M:X_1\rightrightarrows X_2$ and
$\gra(M)\subseteq\gra(\partial_c f)$, we say that $f$ is a
$c$-antiderivative of $M$. In classical settings, say, when
$X_1=X_2=H$ is a real Hilbert space and $c=\scal{\cdot}{\cdot}$
is the inner product on $X$, $f^c=f^*$ is the classical Fenchel
conjugate function of $f$ and $\partial_c f=\partial f$ is the
classical subdifferential of $f$. Let $A:H\to H$ be linear and
bounded. The quadratic form of $A$ is the function $q_A:H\to\RR$
defined by $q_A(x)=\frac{1}{2}\scal{x}{Ax},\ x\in H$. When
$A=\Id$ is the identity on $H$ we will simply write $q = q_{\Id}=\frac{1}{2}\|\cdot\|^2$. Let $n$ be an integer. Then $S_n$ denotes the group of permutations on $n$ elements.

Finally, a remark regarding our conventions is in order. For
convenience, we choose to work with notions, such as $c$-cyclic monotonicity, $c$-splitting set etc., which are compatible with classical two-marginal convex analysis. However, these conventions are not compatible with minimizing the total cost of transportation but, rather, with maximizing it. To make our discussion compatible with optimal transport theory some standard modifications are needed. For example, to make optimal transport compatible with our discussion, one should exchange min for max in the left-hand side of~\eqref{Kellerer}, exchange the max for min in the right-hand side of~\eqref{Kellerer} and, finally, exchange the constraint $\sum_i u_i\leq c$ in the right-hand side of~\eqref{Kellerer} with the constraint $c\leq \sum_i u_i$.

\section{Multi-marginal c-cyclically monotone sets and sets with $c$-cyclically 2-marginal projections}

We begin this section by recalling the notions of $c$-cyclically
monotone sets, the notion of $c$-splitting sets and the relations
between them for general cost functions $c$ in the multi-marginal case.

\begin{definition}\label{monotonicity definitions}
The subset $\Gamma$ of $X$ is said to be $c$-cyclically monotone
of order $n$, $n$-$c$-monotone for short, if for all $n$ tuples
$(x^1_1,\dots,x_N^1),\dots,(x_1^n,\dots,x_N^n)$ in $\Gamma$ and
every $N$ permutations $\sigma_1,\dots,\sigma_N$ in $S_n$,
\begin{equation}\label{cycmondef}
\sum_{j=1}^nc(x_1^{\sigma_1(j)},\dots,x_N^{\sigma_N(j)})\leq \sum_{j=1}^nc(x_1^{j},\dots,x_N^{j});
\end{equation}
$\Gamma$ is said to $c$-cyclically monotone if it is
$n$-$c$-monotone for every $n\in\{2,3,\dots\}$; and 
$\Gamma$ is said to be $c$-monotone if it is $2$-$c$-monotone. 
\end{definition}

\begin{definition}\label{splitting def}
The subset $\Gamma$ of $X$ is said to be a $c$-splitting set if for each $i\in I$ there exists a function $u_i:X_i\to\RX$ such that
\begin{equation*}
c(x_1,\ldots,x_N)\leq\sum_{i=1}^N u_i(x_i)\ \ \ \ \ \ \ \ \ \ \ \ \ \ \ \forall(x_1,\ldots,x_N)\in X
\end{equation*}
and 
\begin{equation*}
c(x_1,\ldots,x_N)=\sum_{i=1}^N u_i(x_i)\ \ \ \ \ \ \ \ \ \ \ \ \ \ \ \forall(x_1,\ldots,x_N)\in \Gamma.
\end{equation*}
In this case we say that $(u_1,\ldots,u_N)$ is a $c$-splitting tuple of $\Gamma$.
\end{definition}

In the case $N=2$ it is well known that $c$-splitting sets are
$c$-cyclically monotone. It was observed that this fact holds for
any $N\geq 2$ as well (see, for example, \cite{KS, KP,Gri}). For
completeness of our discussion and for the reader's convenience 
we include a proof of this fact.  

\begin{fact}\label{splitting implies cyclic}
Let $\Gamma$ be a $c$-splitting subset of $X$. 
Then $\Gamma$ is a $c$-cyclically monotone set.
\end{fact}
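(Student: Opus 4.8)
The plan is to prove that a $c$-splitting set is $c$-cyclically monotone by a direct computation, exploiting the two defining inequalities of Definition~\ref{splitting def}. First I would fix $n\in\{2,3,\dots\}$, take arbitrary points $(x_1^1,\dots,x_N^1),\dots,(x_1^n,\dots,x_N^n)$ in $\Gamma$ and arbitrary permutations $\sigma_1,\dots,\sigma_N\in S_n$, and let $(u_1,\dots,u_N)$ be a $c$-splitting tuple for $\Gamma$ as guaranteed by the hypothesis. The entire argument rests on the observation that summing $u_i(x_i^{j})$ over $j$ is insensitive to permuting the index $j$ within each coordinate: for every fixed $i$ and every $\sigma_i\in S_n$ one has $\sum_{j=1}^n u_i\big(x_i^{\sigma_i(j)}\big)=\sum_{j=1}^n u_i\big(x_i^{j}\big)$, since $\sigma_i$ merely reindexes a finite sum.

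Carrying this out, I would bound the left-hand side of~\eqref{cycmondef} using the inequality $c\le\sum_i u_i$ applied at each of the $n$ permuted tuples $(x_1^{\sigma_1(j)},\dots,x_N^{\sigma_N(j)})$, obtaining
\begin{equation*}
\sum_{j=1}^n c\big(x_1^{\sigma_1(j)},\dots,x_N^{\sigma_N(j)}\big)\ \le\ \sum_{j=1}^n\sum_{i=1}^N u_i\big(x_i^{\sigma_i(j)}\big)\ =\ \sum_{i=1}^N\sum_{j=1}^n u_i\big(x_i^{\sigma_i(j)}\big)\ =\ \sum_{i=1}^N\sum_{j=1}^n u_i\big(x_i^{j}\big),
\end{equation*}
where the first equality is Fubini for finite sums and the second is the permutation-invariance noted above. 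Then, since each original tuple $(x_1^j,\dots,x_N^j)$ lies in $\Gamma$, the equality case $c=\sum_i u_i$ on $\Gamma$ gives $\sum_{i=1}^N\sum_{j=1}^n u_i(x_i^{j})=\sum_{j=1}^n\sum_{i=1}^N u_i(x_i^{j})=\sum_{j=1}^n c(x_1^j,\dots,x_N^j)$, which is exactly the right-hand side of~\eqref{cycmondef}. Chaining these relations yields~\eqref{cycmondef}, and since $n$ was arbitrary, $\Gamma$ is $c$-cyclically monotone.

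One small technical point to address is that the functions $u_i$ take values in $\RX=\,]-\infty,+\infty]$, so the sums above involve possibly the value $+\infty$; since $c$ is real-valued and the equality $c=\sum_i u_i$ holds on $\Gamma$, each $u_i(x_i^j)$ for points of $\Gamma$ is automatically finite, so no $\infty-\infty$ ambiguity arises, and the inequality direction is preserved even if some $u_i$ values appearing only on the left are $+\infty$. I do not anticipate a genuine obstacle here; the only thing requiring a line of care is making the reindexing $\sum_j u_i(x_i^{\sigma_i(j)})=\sum_j u_i(x_i^{j})$ explicit and confirming the finiteness remark, which is what makes this a \textbf{Fact} rather than a theorem.
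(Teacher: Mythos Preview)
Your proof is correct and follows essentially the same approach as the paper's own proof: both fix a $c$-splitting tuple $(u_1,\dots,u_N)$, apply the inequality $c\le\sum_i u_i$ to the permuted tuples, swap the order of summation, use permutation-invariance of $\sum_j u_i(x_i^{\sigma_i(j)})$, and then apply the equality case on $\Gamma$. Your additional remark on finiteness of the $u_i(x_i^j)$ is a nice touch that the paper leaves implicit.
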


\begin{proof}
Let $(u_1\,\ldots,u_N)$ be a $c$-splitting tuple of $\Gamma$, let $(x^1_1,\dots,x_N^1),\dots,(x_1^n,\dots,x_N^n)$ be points in $\Gamma$ and let $\sigma_1,\dots,\sigma_N$ be permutations in $S_n$. Then
\begin{align*}
&\sum_{j=1}^nc(x_1^{\sigma_1(j)},\dots,x_N^{\sigma_N(j)})\leq\sum_{j=1}^n\sum_{i=1}^N u_i(x_i^{\sigma_i(j)})=\sum_{i=1}^N\sum_{j=1}^n u_i(x_i^{\sigma_i(j)})\\ =&\sum_{i=1}^N\sum_{j=1}^n u_i(x_i^{j})
=\sum_{j=1}^n\sum_{i=1}^N u_i(x_i^{j})=\sum_{j=1}^nc(x_1^{j},\dots,x_N^{j}),
\end{align*}
as required.
\end{proof}

\begin{remark}\label{trivial monotonicity}
The following known and elementary facts follow immediately: If $h:X\to\RR$ is a separable function, then there is equality in the definition of $h$-cyclic monotonicity on all of $X$. More explicitly, if $h(x_1,\ldots,x_N)=h_1(x_1)+\cdots+h_N(x_N)$, then for any $n$ tuples $(x^1_1,\dots,x_N^1),\dots,(x_1^n,\dots,x_N^n)$ in $X$ and any $N$ permutations $\sigma_1,\dots,\sigma_N$ in $S_n$,
\begin{align*}
&\sum_{j=1}^nh(x_1^{\sigma_1(j)},\dots,x_N^{\sigma_N(j)})=\sum_{j=1}^nh_1(x_1^{\sigma_1(j)})+\cdots+\sum_{j=1}^nh_N(x_N^{\sigma_N(j)})\\
=& \sum_{j=1}^nh_1(x_1^{j})+\cdots+\sum_{j=1}^nh_N(x_N^{j})=\sum_{j=1}^nh(x_1^{j},\dots,x_N^{j}).
\end{align*}
Consequently, a subset $\Gamma$ of $X$ is $c$-cyclically
monotone if and only if it is $(c+h)$-cyclically monotone for any
separable function $h:X\to\RR$. Furthermore, $(u_1,\ldots,u_n)$
is a $c$-splitting tuple of $\Gamma$ if and only if
$(u_1+h_1,\ldots,u_N+h_N)$ is a $(c+h)$-splitting tuple of
$\Gamma$. Because of the marginal condition plans $\pi\in\Pi$ must satisfy, $\pi$ is an optimal plan for the optimal transport problem with cost $c$ if and only if $\pi$ is optimal for the problem with cost $c+h$.
\end{remark}

In the case $N=2$, the converse implication to the one in
Fact~\ref{splitting implies cyclic} is well known, i.e., 
a subset $\Gamma$ of $X$ is $c$-cyclically monotone if and only if $\Gamma$ is a $c$-splitting set. 
Indeed, this follows from the following generalization of Rockafellar's explicit construction \cite{Rockafellar} from classical convex analysis: 

\begin{definition}\label{Rock antider}
	Suppose that $N=2$, let $\Gamma$ be a nonempty subset of
	$X$, and let $s_1\in\Gamma_1$. 
	With the function $c$, the set $\Gamma$ and the point $s_1$, we associate the function $R_{[c,\Gamma,s_1]}:X_1\to\RX$, defined by
	\begin{equation}
	R_{[c,\Gamma,s_1]}(x_1)=
	\sup_{\begin{array}{c}
		n\in\mathbb{N},\\[3pt]
		x_1^1=s_1,\ x_1^{n+1}=x_1,\\[3pt]
		\big\{(x_1^j,x_2^j)\big\}_{j=1}^n\subseteq\Gamma
		\end{array} }
	\ \ \sum_{j=1}^n \big(c(x_1^{j+1},x_2^j)-c(x_1^j,x_2^j)\big).
	\end{equation}
\end{definition}

\begin{fact}\label{Rock}
	Suppose that $N=2$, let $\Gamma$ be a nonempty
subset of $X$, and let $M:X_1\rightrightarrows X_2$ be the mapping defined via $\gra(M)=\Gamma$. Then $\Gamma$ is $c$-cyclically monotone if and only if $M$
	has a proper $c$-antiderivative. In this case, for any $s_1\in\Gamma_1$,
	the function $R_{[c,\Gamma,s_1]}$ is a proper (and $c$-convex) $c$-antiderivative
	of $M$ which satisfies $R_{[c,\Gamma,s_1]}(s_1)=0$. In fact, $R_{[c,\Gamma,s_1]}$ is proper if and only if $\Gamma$ is $c$-cyclically monotone.
\end{fact}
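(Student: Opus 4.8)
The plan is to follow Rockafellar's classical scheme, adapted to the $c$-convex setting, after first reducing the permutation formulation of $c$-cyclic monotonicity to the standard cyclic one. Since $N=2$, given $\sigma_1,\sigma_2\in S_n$ a reindexing of the left-hand side of \eqref{cycmondef} by $\sigma_2^{-1}$ rewrites that inequality as $\sum_{j=1}^n c(x_1^{\tau(j)},x_2^j)\le\sum_{j=1}^n c(x_1^j,x_2^j)$ with $\tau=\sigma_1\sigma_2^{-1}$, and splitting $\tau$ into disjoint cycles shows that $\Gamma$ is $c$-cyclically monotone if and only if $\sum_{j=1}^{n}c(x_1^{j+1},x_2^{j})\le\sum_{j=1}^{n}c(x_1^{j},x_2^{j})$ (cyclic indices, $x_1^{n+1}=x_1^1$) for every finite family $\{(x_1^j,x_2^j)\}_{j=1}^n$ in $\Gamma$. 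I will work with this cyclic form throughout.

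For the ``if'' part of the first equivalence, suppose $M$ has a proper $c$-antiderivative $f$, so that $\Gamma=\gra M\subseteq\gra\partial_c f$. From $f(x_1^j)+c(x_1',x_2^j)\le f(x_1')+c(x_1^j,x_2^j)$ for all $x_1'$, properness of $f$ forces $f$ to be real-valued at every first coordinate occurring in $\gra\partial_c f$; taking $x_1'=x_1^{j+1}$ and summing cyclically makes the $f$-terms telescope away, leaving the cyclic inequality. Hence $\gra\partial_c f$, and therefore its subset $\Gamma$, is $c$-cyclically monotone.

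For the converse, assume $\Gamma$ is $c$-cyclically monotone, fix $s_1\in\Gamma_1$ (possible as $\Gamma\ne\emptyset$), choose $s_2$ with $(s_1,s_2)\in\Gamma$, and put $R=R_{[c,\Gamma,s_1]}$. The structural fact driving everything is that consecutive pairs in a competing chain are mutually unconstrained --- only each $(x_1^j,x_2^j)$ need lie in $\Gamma$ --- so chains concatenate freely. Then: (a) the one-term chain $x_1^1=s_1$, $x_2^1=s_2$ gives $R(s_1)\ge 0$, while every chain returning to $s_1$ is a cyclic configuration, so $c$-cyclic monotonicity gives $R(s_1)\le 0$; thus $R(s_1)=0$ and $R$ is proper; likewise, appending to an arbitrary competing chain for $R(x_1)$ a pair $(x_1,x_2)\in\Gamma$ and the endpoint $s_1$ produces a cyclic configuration, whence $R(x_1)\le c(x_1,x_2)-c(s_1,x_2)<+\infty$, so $R$ is finite on $\Gamma_1$. (b) Given $(\bar x_1,\bar x_2)\in\Gamma$, $x_1'\in X_1$ and $\varepsilon>0$, take a chain that is $\varepsilon$-optimal for $R(\bar x_1)$, then append the pair $(\bar x_1,\bar x_2)$ and the endpoint $x_1'$; the new chain competes for $R(x_1')$ and gives $R(x_1')\ge R(\bar x_1)-\varepsilon+c(x_1',\bar x_2)-c(\bar x_1,\bar x_2)$, and letting $\varepsilon\downarrow 0$ is exactly the defining inequality for $\bar x_2\in\partial_c R(\bar x_1)$. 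Hence $\gra M\subseteq\gra\partial_c R$, i.e.\ $R$ is a proper $c$-antiderivative of $M$. (c) Regrouping the supremum defining $R$ by the last second coordinate $x_2^n=:y$ exhibits $R$ as $\sup_{y\in X_2}\big(c(\cdot,y)-\beta(y)\big)$ for a suitable proper $\beta:X_2\to\RX$, a supremum of functions of the form $c(\cdot,y)-\alpha$, so $R$ is $c$-convex.

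Finally, for the sharpened statement that $R$ is proper if and only if $\Gamma$ is $c$-cyclically monotone, one direction is (a); for the other I argue contrapositively. If $\Gamma$ violates the cyclic inequality there is a family $\{(y_1^k,y_2^k)\}_{k=1}^m$ in $\Gamma$ with $\delta:=\sum_{k=1}^m\big(c(y_1^{k+1},y_2^k)-c(y_1^k,y_2^k)\big)>0$ (indices mod $m$). For any $x_1\in X_1$, build a chain starting at $s_1$, running through the pairs $(y_1^1,y_2^1),\dots,(y_1^m,y_2^m)$, repeating that loop $r$ times --- each pass contributing exactly $\delta$ to the chain's value --- and finishing at $x_1$; letting $r\to\infty$ forces $R(x_1)=+\infty$, so $R\equiv+\infty$ and $R$ is not proper. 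This loop-insertion argument, together with the bookkeeping needed to keep the competing chains producing finite values in the construction, is the only genuinely delicate point; everything else is routine telescoping.
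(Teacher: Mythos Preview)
Your argument is correct and follows the standard Rockafellar scheme adapted to the $c$-convex setting: reduce the permutation form to the cyclic form, telescope for the ``if'' direction, and for the converse establish $R(s_1)=0$, finiteness on $\Gamma_1$, the subdifferential inclusion via chain concatenation, $c$-convexity by regrouping on the last second coordinate, and improperness via loop insertion when $c$-cyclic monotonicity fails. All of these steps are sound as written; the only minor point worth making explicit is that $R>-\infty$ everywhere (immediate from the one-term chain $c(\cdot,s_2)-c(s_1,s_2)$), which you use implicitly when forming $\varepsilon$-optimal chains.

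As for comparison with the paper: the paper does not actually prove Fact~\ref{Rock}. It is recorded as a known result, with attribution to R\"uschendorf and earlier to Brezis and Rochet, and is simply invoked where needed. So there is no ``paper's own proof'' to compare against; your write-up supplies precisely the classical argument those references contain.
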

Thus, given a $c$-cyclically monotone subset $\Gamma$ of $X$, by combining Fact~\ref{Rock} with~\eqref{Young-Fenchel} and~\eqref{equality in YF}, we conclude that $(u_1,u_2)=(R_{[c,\Gamma,s_1]},R^c_{[c,\Gamma,s_1]})$ is a $c$-splitting tuple of $\Gamma$. 

Even though many authors in the optimal transport literature attribute the above generalization (Fact~\ref{Rock}) of Rockafellar's characterization of cyclic monotonicity to the generality of $c$-convexity theory to~\cite{Rus}, it is known by now that such generalized constructions were available outside classical convex analysis and within the context of optimal transport a decade earlier, independently, in~\cite{Bre} and in~\cite{Rochet}. 

Finer properties of $R_{[c,\Gamma,s_1]}$ were studied and employed in order to construct constrained optimal $c$-antiderivatives in~\cite{BR1} and in the context of optimal transport in~\cite{BR2}. 

In the case when $N\geq 3$, even though existence of a $c$-splitting tuple
for a given $c$-cyclically monotone set is now known in a fairly
general framework (see~\cite{Gri}), an analogous construction to
the one of $R_{[c,\Gamma,s_1]}$ for an arbitrary cost function
$c$ on arbitrary sets $X_i$ is currently unavailable. We now
study a class of cases which allows us to apply two-marginal
$c$-splitting tuples in order to construct multi-marginal ones. To this end, from now on we focus our attention on the class of cost functions $c$ of the following form: Suppose that for each $1\leq i<j\leq N$ we are given a two-marginal cost function (or coupling function) $c_{i,j}:X_i\times X_j\to\RR$. Then we study the cost function $c:X\to\RR$ defined by
\begin{equation}\label{our c}
c(x_1,\ldots,x_N)=\sum_{1\leq i<j\leq N}c_{i,j}(x_i,x_j).
\end{equation}
In our main abstract result (Theorem~\ref{main abstract result}
below), we impose a $c_{i,j}$-cyclic monotonicity condition on the
$\Gamma_{i,j}$'s. By doing so, we may use solutions from the
two-marginal case in the multi-marginal case.

 \begin{theorem}\label{main abstract result}
 Let $\Gamma$ be a nonempty subset of $X$. Suppose that for each
 $1\leq i< j\leq N$ the set $\Gamma_{i,j}$ is $c_{i,j}$-cyclically
 monotone. Then $\Gamma$ is $c$-cyclically monotone. Furthermore,
 there exist functions $f_{i,j}:X_i\to\RX$ such that $\Gamma_{i,j}\subseteq
 \gra(\partial_{c_{i,j}}f_{i,j})$ for each $1\leq i< j\leq N$. In
 particular, given $(s_1,\ldots,s_N)\in{\Gamma}$, one can take
 $f_{i,j}=R_{[c_{i,j},\Gamma_{i,j},s_{i}]}$. Consequently, the
 functions $u_i:X_i\to\RX$ defined by
 \begin{equation}\label{c-spliting u}
 u_i(x_i)=\sum_{i<k\leq N}f_{i,k}(x_i)+\sum_{1\leq k<i}f_{k,i}^{c_{k,i}}(x_i)
 \end{equation}
 form a $c$-splitting tuple of $\Gamma$, that is 
\begin{equation}\label{c-splitting thm} 
c (x_1,\ldots,x_N)\leq \sum_{i=1}^N u(x_i)\ \ \ \ \ \ \ \ \ \ \ \ \ \forall(x_1,\ldots,x_N)\in X, 
\end{equation}
and equality in~\eqref{c-splitting thm} holds for every $(x_1,\ldots,x_N)\in\Gamma$. Furthermore, if 
\begin{equation}\label{main furthermore}
\Gamma_{i,j}=\gra(\partial_{c_{i,j}} f_{i,j})\ \ \ \ \text{for
each}\ \ 1\leq i< j\leq N\ \ \ \ \ \ \ \text{and}\ \ \ \ \ \ \ \
\Gamma=\bigcap_{i<j}P_{i,j}^{-1}(\Gamma_{i,j}), 
\end{equation}
then equality in~\eqref{c-splitting thm} holds if and only if $(x_1,\ldots,x_N)\in\Gamma$. 
 \end{theorem}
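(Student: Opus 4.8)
The plan is to reduce every assertion to the two-marginal theory, pair by pair. For the $c$-cyclic monotonicity of $\Gamma$, I would take arbitrary points $(x_1^j,\dots,x_N^j)_{j=1}^n$ in $\Gamma$ and permutations $\sigma_1,\dots,\sigma_N\in S_n$, substitute $c=\sum_{i<k}c_{i,k}$ into both sides of~\eqref{cycmondef}, and interchange the finite double sum, so that it suffices to verify, for each fixed pair $i<k$, that $\sum_{j=1}^n c_{i,k}(x_i^{\sigma_i(j)},x_k^{\sigma_k(j)})\le\sum_{j=1}^n c_{i,k}(x_i^j,x_k^j)$. But the points $(x_i^j,x_k^j)=P_{i,k}(x^j)$ all lie in $\Gamma_{i,k}$, and this is exactly an instance of the $c_{i,k}$-cyclic monotonicity of $\Gamma_{i,k}$ with the two permutations $\sigma_i,\sigma_k$; so the hypothesis gives it at once.

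For the functions $f_{i,k}$, fix $(s_1,\dots,s_N)\in\Gamma$. Since $P_{i,k}(s_1,\dots,s_N)=(s_i,s_k)\in\Gamma_{i,k}$ we have $s_i\in P_i(\Gamma_{i,k})$, so $R_{[c_{i,k},\Gamma_{i,k},s_i]}$ is well defined; and since $\Gamma_{i,k}$ is $c_{i,k}$-cyclically monotone, Fact~\ref{Rock} applied with $N=2$, cost $c_{i,k}$ and set $\Gamma_{i,k}$ shows that $f_{i,k}:=R_{[c_{i,k},\Gamma_{i,k},s_i]}$ is proper and is a $c_{i,k}$-antiderivative of the mapping with graph $\Gamma_{i,k}$, that is, $\Gamma_{i,k}\subseteq\gra(\partial_{c_{i,k}}f_{i,k})$.

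Next I would establish~\eqref{c-splitting thm} and the equality on $\Gamma$. For every $x\in X$ and every pair $i<k$, the Young--Fenchel inequality~\eqref{Young-Fenchel} applied to $f_{i,k}$ gives $c_{i,k}(x_i,x_k)\le f_{i,k}(x_i)+f_{i,k}^{c_{i,k}}(x_k)$. Summing over all pairs and regrouping the right-hand side according to which of the $N$ variables occurs — the term $f_{i,k}(x_i)$ is assigned to the $i$-th variable, the term $f_{i,k}^{c_{i,k}}(x_k)$ to the $k$-th, after the harmless reindexing $(i,k)\mapsto(k,i)$ in the conjugate terms — reproduces precisely $\sum_{i=1}^N u_i(x_i)$ with $u_i$ as in~\eqref{c-spliting u}; this is~\eqref{c-splitting thm}. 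If moreover $x\in\Gamma$, then $P_{i,k}(x)=(x_i,x_k)\in\Gamma_{i,k}\subseteq\gra(\partial_{c_{i,k}}f_{i,k})$ for each $i<k$, so by the equality case~\eqref{equality in YF} each of these inequalities is an equality, and summing gives equality in~\eqref{c-splitting thm}.

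Finally, the converse under~\eqref{main furthermore}: suppose $x\in X$ achieves equality in~\eqref{c-splitting thm}. Here one must be careful with $+\infty$: $c(x)=\sum_{i<k}c_{i,k}(x_i,x_k)$ is a finite real, while each summand $f_{i,k}(x_i)+f_{i,k}^{c_{i,k}}(x_k)$ is $>-\infty$ (since $f_{i,k}$ is proper, both $f_{i,k}$ and its $c_{i,k}$-conjugate are everywhere $>-\infty$). Thus the equality $\sum_{i<k}c_{i,k}(x_i,x_k)=\sum_{i<k}\big(f_{i,k}(x_i)+f_{i,k}^{c_{i,k}}(x_k)\big)$, combined with the termwise domination from below, forces each inequality to be an equality; by~\eqref{equality in YF} this means $(x_i,x_k)\in\gra(\partial_{c_{i,k}}f_{i,k})=\Gamma_{i,k}$ for every $i<k$, where the first half of~\eqref{main furthermore} is used, hence $x\in\bigcap_{i<k}P_{i,k}^{-1}(\Gamma_{i,k})=\Gamma$ by the second half. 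The proof has no genuine obstacle; the only places where I would be most careful are the regrouping bookkeeping in the third step and the handling of possibly infinite values of the $u_i$ in this last step, everything else being a direct appeal to the two-marginal results.
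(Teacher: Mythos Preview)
Your argument is correct and largely parallels the paper's proof: the construction of the $f_{i,k}$ via Fact~\ref{Rock}, the pairwise Young--Fenchel inequality summed and regrouped to yield~\eqref{c-splitting thm}, the equality on $\Gamma$ via~\eqref{equality in YF}, and the converse under~\eqref{main furthermore} are all handled the same way (and your explicit attention to the $+\infty$ issue in the last step is a welcome bit of care the paper leaves implicit).

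The one genuine difference is how $c$-cyclic monotonicity of $\Gamma$ is obtained. You prove it \emph{directly} from Definition~\ref{monotonicity definitions}: decompose $c=\sum_{i<k}c_{i,k}$, swap the finite sums, and invoke the assumed $c_{i,k}$-cyclic monotonicity of each $\Gamma_{i,k}$ pairwise. The paper instead postpones this step: it first builds the $c$-splitting tuple $(u_1,\dots,u_N)$ and then deduces $c$-cyclic monotonicity of $\Gamma$ from Fact~\ref{splitting implies cyclic}. Your route is slightly more elementary and self-contained (it does not rely on the splitting-implies-cyclic fact), while the paper's route is more economical in that it gets cyclic monotonicity as a free corollary of the construction that had to be carried out anyway. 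Either way the argument is short and both are perfectly valid.
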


\begin{proof}
Let $(x_1,\ldots,x_N)\in X$. By applying~\eqref{Young-Fenchel} for each $1\leq i<j\leq N$ we see that
\begin{equation}\label{2-mar Y-F}
c_{i,j}(x_i,x_j)\leq f_{i,j}(x_i)+f_{i,j}^{c_{i,j}}(x_j).
\end{equation}
Summing up, we arrive at
\begin{align}
c(x_1,\ldots,x_N)&=\sum_{1\leq i\leq N}\sum_{i<j\leq N}c_{i,j}(x_i,x_j)\leq \sum_{1\leq i\leq N}\sum_{i<j\leq N} f_{i,j}(x_i)+f_{i,j}^{c_{i,j}}(x_j)\label{tuple inequality}\\
&=\sum_{1\leq i\leq N}\Bigg(\sum_{i<k\leq N}f_{i,k}(x_i)+\sum_{1\leq k<i}f_{k,i}^{c_{k,i}}(x_i)\Bigg)=\sum_{1\leq i\leq N} u_i(x_i).\nonumber
\end{align}
Furthermore, for $(x_1,\ldots,x_N)\in \bigcap_{i<j}P_{i,j}^{-1}(\Gamma_{i,j})$, for each $1\leq i<j\leq N$, since $(x_i,x_j)\in\Gamma_{i,j}\subseteq\gra\partial_c f_{i,j}$, we have equality in \eqref{2-mar Y-F}, which, in turn, implies equality in \eqref{tuple inequality}. Thus, since $\Gamma\subseteq\bigcap_{i<j}P_{i,j}^{-1}(\Gamma_{i,j})$, we see that $(u_1,\ldots,u_N)$ is a $c$-splitting tuple of $\Gamma$.  As a consequence, $c$-cyclic monotonicity of $\Gamma$ now follows from Fact~\ref{splitting implies cyclic}. Finally, if  $\Gamma_{i,j}=\gra(\partial_{c_{i,j}} f_{i,j})$ for each $1\leq i< j\leq N$ and $\Gamma=\bigcap_{i<j}P_{i,j}^{-1}(\Gamma_{i,j})$, then by applying~\eqref{equality in YF}, we see that there is equality in~\eqref{2-mar Y-F} if and only if $(x_i,x_j)\in\Gamma_{i,j}$. Consequently, we see that there is equality in~\eqref{tuple inequality} if and only if $(x_1,\ldots,x_N)\in\bigcap_{i<j}P_{i,j}^{-1}(\Gamma_{i,j})=\Gamma$, which completes the proof.
\end{proof}

We end this section by making the following observation regarding the applicability of Theorem~\ref{main abstract result}.

\begin{remark}\label{easier condition}
In the next section we shall see that the class of sets $\Gamma$ with $c_{i,j}$-cyclically monotone $\Gamma_{i,j}$'s is, in general, a proper subset of the class of $c$-cyclically monotone sets. Nevertheless, we now claim that verifying the $c_{i,j}$-cyclic monotonicity of the $\Gamma_{i,j}$'s is, in principle, a simpler verification than the one of the more general condition of $c$-cyclic monotonicity of $\Gamma$. Indeed, in the case $N=2$, according to Definition~\ref{monotonicity definitions}, we need to check that given any $n$ points $(x_1^1,x_2^1),\ldots,(x_1^n,x_2^n)$ in $\Gamma$ and any permutations $\sigma_1$ and $\sigma_2$ in $S_n$,
\begin{equation}\label{2-mar cyclic}
\sum_{1\leq j\leq n} c(x_1^{\sigma_1(j)},x_2^{\sigma_2(j)})\leq\sum_{1\leq j\leq n} c(x_1^j,x_2^j).
\end{equation}
However, it is well known (see, for example,~\cite{Vil}) that in the case $N=2$, verifying~
\eqref{2-mar cyclic} for any $\sigma_1$ and $\sigma_2$ in $S_n$
is equivalent to verifying~\eqref{2-mar cyclic} for the specific
permutations $\sigma_1=\Id$ and $\sigma_2(j)=(j+1)\mod n,\ 1\leq
j\leq n$. For $N\geq 3$, running this verification for all the
$\Gamma_{i,j}$'s, is, in principle, simpler than running the
verification over all $\sigma_1,\ldots,\sigma_N\in S_n$ (or,
equivalently, over all $\sigma_1,\ldots,\sigma_N\in S_n$ with $\sigma_1=\Id$) in order to verify the $c$-cyclic monotonicity of $\Gamma$.  Furthermore, as we shall see in our examples, for specific cost functions we sometimes have even simpler criteria in order to determine the $c_{i,j}$-cyclic monotonicity of the $\Gamma_{i,j}$'s.   	
\end{remark}

\section{Classical cost functions and examples of $c$-cyclically monotone sets with and without $c_{i,j}$-cyclically monotone 2-marginal projections}

Let $H$ be a real Hilbert space. In the case where $X_i=H$ for
each $1\leq i\leq N$, a natural way to generalize the cost
function $\scal{\cdot}{\cdot}$, the inner product on $H$, from
the case $N=2$ to the case $N\geq 2$ is to consider
$c_{i,j}=\scal{\cdot}{\cdot}$ for each $1\leq i< j\leq N$
in~\eqref{our c}, that is, the function $c_1$ given
by~\eqref{classical c} below. An early study of multi-marginal
$c$-cyclic monotonicity for classical cost functions
is~\cite{KS}.  This was followed by an extensive study of
multi-marginal optimal transport for these costs in~\cite{GS}.
Similar to the situation in the two-marginal case, by now, the
classical cost functions are probably the most studied ones in
the multi-marginal optimal transport literature as well. 
Let $d\in\{3,4,\ldots\}$. In the case $X_i=\RR^d$ for each $1\leq
i\leq N$ and $N=d$, a natural cost function which is not of the
form~\eqref{our c} is $c(x_1,\ldots,x_N)=\det(x_1,\ldots,x_d),\
(x_1,\ldots,x_d)\in\RR^{d\times d}$ which was studied
in~\cite{CN}. However, we now focus our discussion on the cost functions $c_1,c_2$ and $c_3$ in the following definition. 

\begin{definition}\label{classic costs}
	For each $1\leq i\leq N$, set $X_i=H$. We let $c_1:X\to\RR$ be the function
	\begin{equation}\label{classical c}
	c_1(x_1,\dots,x_N)=\sum_{1\leq i<j\leq N}\scal{x_i}{x_j},
	\end{equation}
	we let $c_2:X\to\RR$ be the function
	\begin{equation}
	c_2(x_1,\dots,x_N)=\sum_{1\leq i<j\leq N}\tfrac{1}{2}\|x_i-x_j\|^2,
	\end{equation}
	and, finally, we let $c_3:X\to\RR$ be the function
	\begin{equation}
	c_3(x_1,\dots,x_N)=\tfrac{1}{2}\bigg\|\sum_{i=1}^N x_i\bigg\|^2.
	\end{equation}
\end{definition}

In the case $N=2$, the notion of $n-c_i$-monotonicity for $1\leq i\leq 3$ is the classical notion of $n$-monotonicity. In this case we omit $c$ from the notion and simply say ``monotone'', or ``$n$-monotone''. Two elementary and known (see, for example,~\cite{KS}) properties of $c_1,c_2$ and $c_3$ we shall employ are:

\begin{fact}\label{c_i equivalent}
	Let $\Gamma$ be a subset of $X$, 
	and let $n\in\{2,3,\dots\}$. Then the following assertions are equivalent:
	\begin{enumerate}
		\item $\Gamma$ is $n$-$c_1$-monotone.
		\item $\Gamma$ is $n$-$(-c_2)$-monotone.
		\item $\Gamma$ is $n$-$c_3$-monotone.
	\end{enumerate}
\end{fact}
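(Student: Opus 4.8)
The plan is to prove Fact~\ref{c_i equivalent} by a direct computation showing that, for any finite family of points in $X$, the defining sums in~\eqref{cycmondef} for $c_1$, $-c_2$, and $c_3$ differ only by terms that are invariant under permutations $\sigma_1,\dots,\sigma_N$, so that the inequality~\eqref{cycmondef} holds for one cost exactly when it holds for the others. By Remark~\ref{trivial monotonicity} it suffices to show that $-c_2$, $c_3$ and $c_1$ differ pairwise by separable functions $h:X\to\RR$; indeed, if this is established, then $\Gamma$ is $c_1$-cyclically monotone of order $n$ iff it is $(c_1+h)$-cyclically monotone of order $n$, for the appropriate separable $h$.

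First I would expand $-c_2$. Using $-\tfrac12\|x_i-x_j\|^2 = \scal{x_i}{x_j} - \tfrac12\|x_i\|^2 - \tfrac12\|x_j\|^2$ and summing over $1\leq i<j\leq N$, each index $i$ appears in exactly $N-1$ of the pairs, so
\begin{equation*}
-c_2(x_1,\dots,x_N) = c_1(x_1,\dots,x_N) - \tfrac{N-1}{2}\sum_{i=1}^N \|x_i\|^2.
\end{equation*}
Hence $-c_2 = c_1 + h$ with $h(x_1,\dots,x_N) = -\tfrac{N-1}{2}\sum_i\|x_i\|^2$ separable, so (i)$\iff$(ii) by Remark~\ref{trivial monotonicity}. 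Next I would expand $c_3$: since $\big\|\sum_i x_i\big\|^2 = \sum_i\|x_i\|^2 + 2\sum_{i<j}\scal{x_i}{x_j}$, we get
\begin{equation*}
c_3(x_1,\dots,x_N) = c_1(x_1,\dots,x_N) + \tfrac12\sum_{i=1}^N\|x_i\|^2,
\end{equation*}
so $c_3 = c_1 + \tilde h$ with $\tilde h(x_1,\dots,x_N) = \tfrac12\sum_i\|x_i\|^2$ separable, giving (i)$\iff$(iii) by Remark~\ref{trivial monotonicity}. Chaining the two equivalences completes the proof.

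There is essentially no obstacle here: the content is the two algebraic identities above, and the passage from "differs by a separable function" to "same cyclic monotonicity" is exactly Remark~\ref{trivial monotonicity}. The only point to be a little careful about is that Remark~\ref{trivial monotonicity} is phrased for $c$-cyclic monotonicity, but its proof works verbatim order by order, so it also yields the equivalence of $n$-$c$-monotonicity with $n$-$(c+h)$-monotonicity for each fixed $n$; I would note this explicitly (or simply repeat the one-line computation from Remark~\ref{trivial monotonicity} inside the proof, now keeping $n$ fixed) so that the statement about a fixed order $n$ is justified. Alternatively, one can avoid invoking the remark and argue directly: substitute the identities into both sides of~\eqref{cycmondef}, observe that the separable correction contributes $\sum_j\sum_i h_i(x_i^{\sigma_i(j)}) = \sum_i\sum_j h_i(x_i^{\sigma_i(j)}) = \sum_i\sum_j h_i(x_i^j)$ to each side since $\sigma_i$ merely reindexes the inner sum, and conclude that the inequality is unchanged.
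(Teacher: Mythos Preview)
Your proof is correct and follows essentially the same route as the paper: both arguments reduce to the identities $-c_2=c_1-(N-1)\sum_i q(x_i)$ and $c_3=c_1+\sum_i q(x_i)$ and then invoke Remark~\ref{trivial monotonicity}. Your explicit remark that Remark~\ref{trivial monotonicity} works order by order (so that it applies to $n$-$c$-monotonicity for fixed $n$) is a point the paper's proof leaves implicit.
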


\begin{proof}
(i) $\Leftrightarrow$ (ii) follows from the fact that 
$
c_2(x_1,\dots,x_N)=-c_1(x_1,\dots,x_N)+(N-1)\sum_{1\leq i\leq N}q(x_i)
$ for all $(x_1,\dots,x_N)\in X$
and by letting $h_i=(N-1)q$ in Remark~\ref{trivial monotonicity}. 
Similarly, (i) $\Leftrightarrow$ (iii) follows from the fact that
$
c_3(x_1,\dots,x_N)=c_1(x_1,\dots,x_N)+\sum_{1\leq i\leq N}q(x_i)
$
for all $(x_1,\dots,x_N)\in X$ and by letting $h_i=q$ in Remark~\ref{trivial monotonicity}.
\end{proof}

\begin{fact}\label{mono under shift}
Let $c\in\{c_1,c_2,c_3\}$ and let $z=(z_1,\ldots,z_N)\in
X$. If the subset $\Gamma$ of $X$ is $n$-$c$-cyclically monotone,
then so is $\Gamma+z$.
\end{fact}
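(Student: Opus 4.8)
The plan is to reduce the claim to the separability observation in Remark~\ref{trivial monotonicity}, exactly as in the proof of Fact~\ref{c_i equivalent}. The key point is that for each of $c_1,c_2,c_3$, replacing the argument $(x_1,\ldots,x_N)$ by $(x_1+z_1,\ldots,x_N+z_N)$ changes the cost by a separable function of $(x_1,\ldots,x_N)$ plus a constant; since both separable functions and constants are irrelevant to $n$-$c$-cyclic monotonicity, the conclusion follows. Concretely, for $\Gamma$ a subset of $X$, define $\tau_z\colon X\to X$ by $\tau_z(x_1,\ldots,x_N)=(x_1+z_1,\ldots,x_N+z_N)$, so that $\Gamma+z=\tau_z(\Gamma)$, and note that $\Gamma$ is $n$-$c$-monotone if and only if $\Gamma+z$ is $n$-$\tilde c$-monotone, where $\tilde c(x_1,\ldots,x_N)=c\bigl(\tau_z(x_1,\ldots,x_N)\bigr)$; this is immediate by relabelling the points $(x_1^j,\ldots,x_N^j)\mapsto(x_1^j+z_1,\ldots,x_N^j+z_N)$ in inequality~\eqref{cycmondef}, which is a bijection of $n$-tuples in $\Gamma$ onto $n$-tuples in $\Gamma+z$ that commutes with the action of the permutations $\sigma_1,\ldots,\sigma_N$.

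It therefore suffices to show that $\tilde c - c$ is separable (up to an additive constant) for each $c\in\{c_1,c_2,c_3\}$. For $c=c_1$, expanding bilinearity gives
\begin{equation*}
\tilde c_1(x_1,\ldots,x_N)=\sum_{1\leq i<j\leq N}\scal{x_i+z_i}{x_j+z_j}=c_1(x_1,\ldots,x_N)+\sum_{1\leq i\leq N}\Scal{x_i}{\sum_{k\neq i}z_k}+\sum_{1\leq i<j\leq N}\scal{z_i}{z_j},
\end{equation*}
so $\tilde c_1-c_1$ equals the separable function $x_i\mapsto\scal{x_i}{\sum_{k\neq i}z_k}$ summed over $i$, plus a constant. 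For $c=c_2$ one computes $\tilde c_2(x_1,\ldots,x_N)=\sum_{i<j}\tfrac12\|(x_i-x_j)+(z_i-z_j)\|^2 = c_2(x_1,\ldots,x_N)+\sum_{i<j}\scal{x_i-x_j}{z_i-z_j}+\sum_{i<j}\tfrac12\|z_i-z_j\|^2$, and the middle term regroups as the separable function $x_i\mapsto\scal{x_i}{(N-1)z_i-\sum_{k\neq i}z_k}$ summed over $i$. For $c=c_3$, $\tilde c_3(x_1,\ldots,x_N)=\tfrac12\|\sum_i x_i+\sum_i z_i\|^2=c_3(x_1,\ldots,x_N)+\Scal{\sum_i x_i}{\sum_i z_i}+\tfrac12\|\sum_i z_i\|^2$, and $\Scal{\sum_i x_i}{\sum_i z_i}=\sum_i\Scal{x_i}{\sum_k z_k}$ is separable. (Alternatively, since Fact~\ref{c_i equivalent} already reduces all three cases to one another, it is enough to treat $c_1$.)

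Finally, combining these identities with Remark~\ref{trivial monotonicity}: writing $\tilde c = c + h + \mathrm{const}$ with $h$ separable, a subset of $X$ is $n$-$c$-monotone if and only if it is $n$-$(c+h)$-monotone (the constant plainly cancels in~\eqref{cycmondef} since both sides involve exactly $n$ terms), hence if and only if it is $n$-$\tilde c$-monotone. Applying this with the set $\Gamma+z$ and using the first paragraph: $\Gamma$ is $n$-$c$-monotone $\iff$ $\Gamma+z$ is $n$-$\tilde c$-monotone $\iff$ $\Gamma+z$ is $n$-$c$-monotone, which is the assertion. I expect no real obstacle here; the only thing to be careful about is the bookkeeping in the cross-term expansions and making sure the permutations act freely on the two index sets, but this is routine since the separable decomposition is explicit.
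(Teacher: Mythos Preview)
Your proof is correct and follows essentially the same route as the paper's: both establish that the shifted cost differs from the original by a separable function plus a constant, and then invoke Remark~\ref{trivial monotonicity}. The only cosmetic difference is that the paper first uses Fact~\ref{c_i equivalent} to reduce to the single case $c=c_3$ (which you mention as an alternative), whereas you expand all three cases directly.
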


\begin{proof}
In view of Fact~\ref{c_i equivalent}, we may assume, without the loss of generality, that $c=c_3$. We set $w=\sum_{1\leq i\leq N} z_i$. Then $c_3(x_1+z_1,\dots,x_N+z_N)=c_3(x_1,\dots,x_N)+\sum_{1\leq i\leq N}\scal{x_i}{w}+q(w)$. Consequently, the proof follows by letting $h_i=\scal{\cdot}{w}+\frac{q}{N}$ in Remark~\ref{trivial monotonicity}.
\end{proof}	

We now present two examples. The first example demonstrates the
advantages of our approach, such as described in Remark~\ref{easier
condition}, in identifying particular $c$-cyclically monotone sets
which are the subject of matter and in explicitly computing
$c$-splitting tuples.

\begin{example}
Let $Q_1\in\RR^{d\times d}$ and $Q_2\in\RR^{d\times d}$ be 
symmetric and positive definite. We recall that if $Q_1$
and $Q_2$ commute, then $Q_1Q_2$ is symmetric and positive definite
(see, for example,~\cite[Theorem~4.6.9]{DM} or~\cite[Chapter~VII]{RS}).
Furthermore, in this case, since $Q_1$ and $Q_2^{-1}$ also commute,
then $Q_1Q_2^{-1}$ is symmetric and positive definite. These facts
give rise to the following example: For each $1\leq i\leq N$, set
$X_i=\RR^d$ and let $Q_i\in\RR^{d\times d}$ be symmetric,
positive definite, and pairwise commuting. 
Set 
$$
\Gamma=\big\{(Q_1v,\ldots,Q_Nv)\ \big|\ v\in\RR^d\big\}.
$$
Then, for each $1\leq i<j\leq N$, we have
$$
\Gamma_{i,j}=\big\{(Q_i v,Q_j v)\ \big|\ v\in\RR^d\big\}=
\big\{(w,Q_jQ_i^{-1}w)\ \big|\ w\in\RR^d\big\}\ \ \ \ \ \ \text{and}\ \ \ \ \ \ \ \Gamma=\bigcap_{i<j}P_{i,j}^{-1}(\Gamma_{i,j}).
$$
Since $Q_jQ_i^{-1}$ is symmetric and positive definite, we see that
$\Gamma_{i,j}$ is monotone. We now explain how 
Theorem~\ref{main abstract result} is applicable. 
To this end, we set
$f_{i,j}=q_{Q_jQ_i^{-1}}$.
Then
$$
\partial f_{i,j}=\nabla f_{i,j}=P_jP_i^{-1}\ \ \ \ \ \ \text{and}\ \ \ \ \ \ \ \gra(\partial f_{i,j})=\Gamma_{i,j}.
$$  
Furthermore,
$$
f_{i,j}^*=q^*_{Q_jQ_i^{-1}}=q_{(Q_jQ_i^{-1})^{-1}}=q_{Q_iQ_j^{-1}}=f_{j,i}.
$$
We also set
$$
u_i=\sum_{i<k\leq N}f_{i,k}+\sum_{1\leq k<i}f_{k,i}^*=\sum_{k\neq
i}f_{i,k}=\sum_{k\neq i}q_{Q_kQ^{-1}_i}
=q_{\sum_{k\neq i}Q_kQ^{-1}_i}=q_{M_i}.
$$
where $M_i\in\RR^{d\times d}$ is defined by 
$$
M_i=\Big(\sum_{k\neq i}Q_k\Big)Q_i^{-1}.
$$
Finally, since \eqref{main furthermore} holds, by applying Theorem~\ref{main abstract result}, we see that
$$
c_1(x_1,\ldots,x_N)=\sum_{1\leq i< j\leq N}\scal{x_i}{x_j}\leq\sum_{1\leq i\leq N}q_{M_i}(x_i)\ \ \ \ \ \ \ \ \ \ \ \text{for all}\ \ (x_1,\ldots,x_N)\in X
$$
and equality holds if and only if $(x_1,\ldots,x_N)\in\Gamma$. If we set $G_i=\Id+M_i=\Big(\sum_{1\leq k\leq N}Q_k\Big)Q_i^{-1}$, we can write, equivalently, 
$$
c_3(x_1,\ldots,x_N)=\bigg\|\sum_{1\leq i\leq N}x_i\bigg\|^2\leq\sum_{1\leq i\leq N}q_{G_i}(x_i)\ \ \ \ \ \ \ \ \ \ \ \text{for all}\ \ (x_1,\ldots,x_N)\in X
$$
and equality holds if and only if $(x_1,\ldots,x_N)\in\Gamma$.
\end{example}

In the following example we demonstrate that the $\Gamma_{i,j}$'s
being $c_{i,j}$-cyclically monotone is a sufficient condition,
however, it is not necessary in order that $\Gamma$ be $c$-cyclically monotone and a $c$-splitting set.

\begin{example}
Suppose that $N=3$ and that $X_1 = X_2=X_3=\RR^2$.  We set
$$
A_1=2\begin{pmatrix}
1 & 0\\0 & 0
\end{pmatrix},\ \ \ \
A_2=2	\begin{pmatrix}
1 & 0\\0 & 1
\end{pmatrix},\ \ \ \ 
A_3=\frac{1}{7}	\begin{pmatrix}
8 & 3\\3 & 2
\end{pmatrix}
$$
and
$$
\Delta=\big\{(a,a)\ \big|\ a\in\RR\big\} \subseteq\RR^2.
$$
Furthermore, set 
$$
u_1=\iota_{\RR\times\{0\}}+q_{A_1},\ \ \ \ \ \ \ u_2=\iota_\Delta+q_{A_2}=\iota_\Delta+2q,\ \ \ \ \ \ \ \ \ \text{and}\ \ \ \ \ \ \ \ \ u_3=q_{A_3}.
$$
Our aim is to study the $c$-cyclic monotonicity properties of the 
subset $\Gamma$ of $X$ defined by:
\begin{equation}\label{non mono shadows gamma}
\Gamma=\Big\{(x_1,x_2,x_3)\in\big(\RR^2\big)^3\ \Big|\ \scal{x_1}{x_2}+\scal{x_2}{x_3}+\scal{x_3}{x_1}= u_1(x_1)+u_2(x_2)+u_3(x_3)\ \Big\}.
\end{equation}
To this end, we claim the following:
\begin{enumerate}
	\item $\scal{x_1}{x_2}+\scal{x_2}{x_3}+\scal{x_3}{x_1}\leq u_1(x_1)+u_2(x_2)+u_3(x_3)\ \ \ $ for all $(x_1,x_2,x_3)\in\big(\RR^2\big)^3$;\\
	
	\item Let $v_1=\big((0,0),(-1,-1),(1,-5)\big)$ and $v_2=\big((1,0),(2,2),(0,7)\big)$. Then $\Gamma=\spa\{v_1,v_2\}$;\\
	
	\item $\Gamma_{1,2}, \Gamma_{1,3}$ and $\Gamma_{2,3}$ are not monotone.
\end{enumerate}
Before we prove these claims, let us discuss their consequences:
By combining~\eqref{non mono shadows gamma} with (i) we see that
$(u_1,u_2,u_3)$ is a $c_1$-splitting tuple of $\Gamma$.
Consequently, Fact~\ref{splitting implies cyclic} now implies
that $\Gamma$ is $c_1$-cyclically monotone. On the other hand,
(iii) implies that the $\Gamma_{i,j}$'s are not monotone. In
summary,
$\Gamma$ is a $c$-cyclically monotone set (with the explicit
splitting tuple $(u_1,u_2,u_3)$) such that all of its 
2-marginal projections $\Gamma_{1,2}$, $\Gamma_{1,3}$ and $\Gamma_{2,3}$
are nonmonotone. We therefore conclude that the condition requiring the $\Gamma_{i,j}$'s to be $c_{i,j}$-cyclically monotone for all $1\leq i<j\leq N$ is only a sufficient condition implying that $\Gamma$ is a $c$-splitting (and $c$-cyclically monotone) set; however, it is not a necessary condition. 
\end{example}

We now turn to proving (i)--(iii):

\begin{proof}
We set $x_1=(a_1,b_1),\ x_2=(a_2,b_2)$ and $x_3=(a_3,b_3)$,
 and we will prove that
\begin{equation}\label{u inequality}
u_1(x_1)+u_2(x_2)+u_3(x_3)-\scal{x_1}{x_2}-\scal{x_2}{x_3}-\scal{x_3}{x_1}
\geq 0. 
\end{equation}
Since $u_1(a_1,b_1)=\infty$ whenever $b_1\neq 0$ and since
$u_2(a_2,b_2)=\infty$ whenever $a_2\neq b_2$, it is enough to
prove~\eqref{u inequality} in the case $b_1=0$ and $b_2=a_2$,
which we assume from now on.
Then
\begin{align*}
&u_1(x_1)+u_2(x_2)+u_3(x_3)-\scal{x_1}{x_2}-\scal{x_2}{x_3}-\scal{x_3}{x_1}\\
=\ &u_1(a_1,0)+u_2(a_2,a_2)+u_3(a_3,b_3)-\scal{(a_1,0)}{(a_2,a_2)}-\scal{(a_2,a_2)}{(a_3,b_3)}-\scal{(a_3,b_3)}{(a_1,0)}\\
=\ &a_1^2+2a_2^2+\tfrac{1}{7}(4a_3^2+3a_3b_3+b_3^2)-a_1a_2-a_2a_3-a_2b_3-a_1a_3=\scal{x}{Mx}=\scal{x}{\sym(M)x}
\end{align*}
where $x\in\RR^4$ is given by $x=(a_1,a_2,a_3,b_3)$, $M\in\RR^{4\times 4}$ is the matrix given by
$$
M=
\begin{pmatrix}
1 & -1 & -1 & 0\\[11pt]
0 & 2 & -1 & -1\\[11pt]
0 & 0 & \dfrac{4}{7} & \dfrac{3}{7}\\[11pt]
0 & 0 & 0 & \dfrac{1}{7}
\end{pmatrix}
\ \ \ \ \ \ \ \ \ \text{and}\ \ \ \ \
\sym(M)=\frac{1}{2}(M+M^T)=
\begin{pmatrix}
1 & -\dfrac{1}{2} & -\dfrac{1}{2} & 0\\[11pt]
-\dfrac{1}{2} & 2 & -\dfrac{1}{2} & -\dfrac{1}{2}\\[11pt]
-\dfrac{1}{2} & -\dfrac{1}{2} & \dfrac{4}{7} & \dfrac{3}{14}\\[11pt]
0 & -\dfrac{1}{2} & \dfrac{3}{14} & \dfrac{1}{7}
\end{pmatrix}.
$$
The characteristic polynomial of $\sym(M)$ is
$$
\frac{1}{28}z^2(z^2-104z+89)=z^2\Big(z-\frac{26-\sqrt{53}}{14}\Big)\Big(z-\frac{26+\sqrt{53}}{14}\Big).
$$
We see that the eigenvalues of $\sym(M)$ are nonnegative. 
Consequently, $\sym(M)$ is positive semidefinite, which completes the proof of (i). Furthermore,
$$
\ker\big(\sym(M)\big)=\spa\big\{(0,-1,1,-5),(1,2,0,7)\big\}.
$$
By recalling that $b_1=0$ and $a_2=b_2$ we arrive at (ii). Thus, we now see that $(0,0,0)\in\Gamma$ and that for any $\lambda\in\RR$,
$$
\big((x_1(\lambda),x_2(\lambda),x_3(\lambda)\big)=\big((1,0),(2-\lambda,2-\lambda),(\lambda,7-5\lambda)\big)=\lambda v_1+v_2\in\Gamma.
$$
Consequently, 
\begin{align}
&\scal{x_1(\lambda)-0}{x_2(\lambda)-0}=2-\lambda<0\ \ \ \ \ &\Leftrightarrow\ \ \ \ \ \ &2<\lambda;\label{12 non-mono}\\
\nonumber\\
&\scal{x_1(\lambda)-0}{x_3(\lambda)-0}=\lambda<0\ \ \ \ \ &\Leftrightarrow\ \ \ \ \ \ &\lambda<0;\label{13 non-mono}\\
\nonumber\\
&\scal{x_2(\lambda)-0}{x_3(\lambda)-0}=(2-\lambda)(7-4\lambda)<0\ \ \ \ \ &\Leftrightarrow\ \ \ \ \ \ &\frac{7}{4}<\lambda<2.\label{23 non-mono}
\end{align}
\eqref{12 non-mono} implies that $\Gamma_{1,2}$ is not monotone, \eqref{13 non-mono} implies that $\Gamma_{1,3}$ is not monotone and, finally, \eqref{23 non-mono} implies that $\Gamma_{2,3}$ is not monotone which completes the proof.
\end{proof}

Our discussion thus far raises the following natural, currently
unsolved, questions: 
\ding{192}
For which cost functions $c$ of the form~\eqref{our c},
$c$-cyclic monotonicity of a set $\Gamma$ implies the
$c_{i,j}$-cyclic monotonicity of its $\Gamma_{i,j}$'s?
\ding{193}
Given a cost function $c$, for which sets $\Gamma$, $c$-cyclic monotonicity of a set $\Gamma$ implies the $c_{i,j}$-cyclic monotonicity of its $\Gamma_{i,j}$'s?

\section{Multi-marginal classical cost functions in the one-dimensional case}

In this section, we focus our attention to the case where $X_i=\RR$ for each $1\leq i\leq N$ and $c:X\to\RR$ is given by $c=c_1$, that is,
\begin{equation}\label{1d c}
c(x_1,\ldots,x_N)=\sum_{1\leq i<j\leq N} x_ix_j.
\end{equation}
(Equivalently, we may consider $c=c_2$ or $c=c_3$.) For a more
general class of costs it was established in~\cite{Car} that if
$\Gamma\subseteq X$ is a $c$-splitting set then 2-marginal
projections $\Gamma_{i,j}$ are monotone in $\RR^2$. A more
elementary proof of this fact was provided in~\cite{Pas2}. 
We thank an anonymous referee for pointing out these connections.
For the sake of completeness of
our discussion and for the convenience of the reader we include
below a proof of this fact in the spirit of~\cite{Pas2} for the
cost $c$ in \eqref{1d c}.  We then combine this fact with our
discussion in the previous sections, and obtain characterizations
of $c$-splitting sets. 
Thus, our aim is to
show that $\Gamma$ being $c$-cyclically monotone
is, in fact, equivalent to the $\Gamma_{i,j}$'s being cyclically
monotone. Furthermore, for $N=2$, it is well known that $\Gamma$
is monotone if and only if it is cyclically monotone and that in
this case $\Gamma$ is a splitting set. We will conclude that these
elementary facts from classical convex function theory on the
real line hold in the multi-marginal case as well. To this end we
will make use of the following lemma which, geometrically,
asserts the following: In the case $N=2$, the set $\Gamma$ is monotone if and only if for any point $(z_1,z_2)\in\Gamma$, the translated set $\Gamma-(z_1,z_2)$ is contained in the first and third quarters of the plane, that is, in 
$\RP^2\cup\RM^2$,
where $\RP = \menge{x\in \RR}{x\geq 0}$
and $\RM=\menge{x\in\RR}{x\leq 0}$. 
The following lemma asserts that for any $N\geq 2$, the set $\Gamma$ is $c_1$-monotone if and only if for any point $(z_1,\ldots,z_N)\in\Gamma$ the set $\Gamma-(z_1,\ldots,z_N)$ is contained in 
$\RP^N\cup\RM^N$.
\begin{lemma}\label{real line mono signs}
Let $X_i=\RR$ for each $1\leq i\leq N$ and set $c=c_1$.
If $(t_1,\ldots,t_N)\in X$ is in $c$-monotone relations with $(0,\ldots,0)$ (that is, the set $\{(t_1,\ldots,t_N),\ (0,\ldots,0)\}$ is $c$-monotone), then all of the $t_i$'s have the same sign, that is, 
$(t_1,\ldots,t_N)\in\RP^N\cup\RM^N$.
\end{lemma}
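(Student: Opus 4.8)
The plan is to extract from the definition of $c$-monotonicity one scalar inequality per coordinate and then read off the sign pattern by a three-case argument. Write $(x^1_1,\dots,x^1_N)=(t_1,\dots,t_N)$ and $(x^2_1,\dots,x^2_N)=(0,\dots,0)$ for the two points of the set. By Definition~\ref{monotonicity definitions}, $c$-monotonicity of $\{(t_1,\dots,t_N),(0,\dots,0)\}$ means that
\[
c(x^{\sigma_1(1)}_1,\dots,x^{\sigma_N(1)}_N)+c(x^{\sigma_1(2)}_1,\dots,x^{\sigma_N(2)}_N)\ \le\ c(t_1,\dots,t_N)+c(0,\dots,0)
\]
for every choice of $\sigma_1,\dots,\sigma_N\in S_2$. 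I would fix $k\in\{1,\dots,N\}$ and choose $\sigma_k$ to be the transposition in $S_2$ and $\sigma_i=\Id$ for $i\ne k$; the two configurations on the left-hand side then become $a=(t_1,\dots,t_{k-1},0,t_{k+1},\dots,t_N)$ and $b=(0,\dots,0,t_k,0,\dots,0)$.

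The computation is routine. Since $c_1$ is the sum of the pairwise products $x_px_q$, a configuration with a single nonzero entry has zero cost, so $c_1(b)=0$; and replacing $t_k$ by $0$ in $(t_1,\dots,t_N)$ removes exactly the products $t_kt_i$ with $i\ne k$, so $c_1(a)=c_1(t_1,\dots,t_N)-t_k\sum_{i\ne k}t_i$. Substituting these into the displayed inequality and cancelling $c_1(t_1,\dots,t_N)$ leaves, for every $k$,
\[
t_k\sum_{i\ne k}t_i\ \ge\ 0,\qquad\text{equivalently}\qquad t_kT\ \ge\ t_k^2,
\]
where $T=\sum_{i=1}^N t_i$.

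The last step is a case analysis on the sign of $T$. If $T>0$, then $t_k\ge t_k^2/T\ge 0$ for every $k$, so $(t_1,\dots,t_N)\in\RP^N$. If $T<0$, then $t_k^2\le t_kT$ together with $T<0$ forces $t_k\le 0$ for every $k$, so $(t_1,\dots,t_N)\in\RM^N$. If $T=0$, then $0\ge t_k^2$ gives $t_k=0$ for every $k$, and $(t_1,\dots,t_N)=(0,\dots,0)$ lies in both $\RP^N$ and $\RM^N$. In all cases $(t_1,\dots,t_N)\in\RP^N\cup\RM^N$, as claimed.

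There is essentially no obstacle here; the only point requiring a little care is the bookkeeping that identifies the two configurations $a$ and $b$ produced by the chosen permutations and confirms that the cross terms $t_kt_i$ survive on only one side of the inequality. One could instead invoke the standard reduction (recalled in Remark~\ref{easier condition}) that for $n=2$ it suffices to test the cyclic permutation, but the coordinatewise transpositions used above give the cleanest route for this particular cost.
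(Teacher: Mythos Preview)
Your proof is correct. The approach, however, differs from the paper's. The paper argues by contradiction: assuming the signs are mixed, it partitions $I$ into $I_+=\{i:t_i\ge 0\}$ and $I_-=\{i:t_i<0\}$, applies $c$-monotonicity to the \emph{single} pair of configurations $(t_1^+,\dots,t_N^+)$ and $(t_1^-,\dots,t_N^-)$ obtained by zeroing out the negative (resp.\ positive) entries, and reads off $0\le\big(\sum_{I_+}t_i\big)\big(\sum_{I_-}t_i\big)<0$. You instead apply $c$-monotonicity $N$ times, once per coordinate, to obtain the family of inequalities $t_kT\ge t_k^2$, and then finish with a case split on the sign of $T=\sum_i t_i$. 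The paper's route is shorter (one permutation, no case analysis), while yours has the mild advantage of being a direct proof and of making transparent that each coordinate individually is forced to the sign of the total sum; both arguments use only the definition of $2$-$c$-monotonicity and elementary algebra.
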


\begin{proof}
We argue by contradiction. Thus, we assume to the contrary that
$(t_1,\ldots,t_N)$ is $c$-monotonically related to $(0,\ldots,0)$
and that not all of the $t_i$'s have the same sign. We define a
partition of the index set $I=\{1,\ldots,N\}$ by $I_+=\{i\in I\ |\
t_i\geq 0 \}$
and $I_-=\{i\in I\ |\ t_i<0\ \}$. Consequently, 
$$
\bigg(\sum_{i\in I_+}t_i\bigg)\bigg(\sum_{i\in I_-}t_i\bigg)<0.
$$ 
For each $1\leq i\leq N$, we define 
$$
t_i^+=\begin{cases}
t_i, &\text{if $i\in I_+$;}\\ 
0, &\text{if $i\in I_-$}
\end{cases} 
\qquad \text{and}\qquad
t_i^-=\begin{cases}
0, & \text{if $i\in I_+$;} \\ 
t_i & \text{if $i\in I_-$.}
\end{cases}
$$
Finally, by employing the definition of $c$-monotonicity 
and our notation above we arrive ---after some algebraic
manipulations---at
\begin{align*}
0&\leq c(t_1,\ldots,t_N)+c(0,\ldots,0)-c(t_1^+,\ldots,t_N^+)-c(t_1^-,\ldots,t_N^-)\\
\\
&=\sum_{i,j\in I,\ i<j} t_i t_j\ \ \ \ \ \ +0\ \ \ \ \ \ -\sum_{i,j\in I,\ i<j} t_i^+ t_j^+-\sum_{i,j\in I,\ i<j} t_i^- t_j^-\\
\\
&=\sum_{i,j\in I,\ i<j} t_i t_j-\sum_{i,j\in I^+,\ i<j} t_i
t_j-\sum_{i,j\in I^-,\ i<j} t_i t_j=\Big(\sum_{i\in
I_+}t_i\Big)\Big(\sum_{i\in I_-}t_i\Big)<0,
\end{align*}
which is the desired contradiction.
\end{proof}

\begin{theorem}\label{mono on the real line}
Let $X_i=\RR$ for each $1\leq i\leq N$, and set $c=c_1$ (equivalently, $c=-c_2$ or $c=c_3$).
For a subset $\Gamma$ of $X=\RR^N$, the following assertions are equivalent:
\begin{enumerate}
\item $\Gamma$ is $c$-cyclically monotone in $\RR^N$.
\item $\Gamma$ is $c$-monotone in $\RR^N$.
\item $\Gamma_{i,j}$ is cyclically monotone in $\RR^2$ for each $1\leq i< j\leq N$.
\item $\Gamma_{i,j}$ is monotone in $\RR^2$ for each $1\leq i< j\leq N$.
\item For each $1\leq i\leq N$, there exist a proper, convex and lower semicontinuous function $u_i:\RR\to\RX$ such that $(u_1,\ldots,u_N)$ is a $c$-splitting tuple of $\Gamma$.
\item For each $1\leq i< j\leq N$ there exist a proper, convex and lower semicontinuous function $f_{i,j}:\RR\to\RX$ such that $\Gamma_{i,j}\subseteq\gra(\partial f_{i,j})$.
\end{enumerate}
In this case, for each $1\leq i\leq N$, one can take 
\begin{equation}\label{u real line}
 u_i(x_i)=\sum_{i<k}f_{i,k}(x_i)+\sum_{k<i}f_{k,i}^*(x_i).
\end{equation}
\end{theorem}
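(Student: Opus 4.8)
The plan is to establish the chain of implications
$(i)\Rightarrow(ii)\Rightarrow(iv)\Rightarrow(iii)\Rightarrow(vi)\Rightarrow(v)\Rightarrow(i)$,
so that all six assertions become equivalent, and to read off the formula~\eqref{u real line} from the passage $(vi)\Rightarrow(v)$. By Fact~\ref{c_i equivalent} together with Remark~\ref{trivial monotonicity} it suffices to treat the case $c=c_1$, the cases $c=-c_2$ and $c=c_3$ then being handled identically; moreover we may assume $\Gamma\neq\emptyset$, as otherwise every assertion holds vacuously. Two links are immediate: $(i)\Rightarrow(ii)$ is the case $n=2$ in Definition~\ref{monotonicity definitions}, and $(v)\Rightarrow(i)$ is Fact~\ref{splitting implies cyclic}.

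The heart of the matter is $(ii)\Rightarrow(iv)$, where the one-dimensional structure enters through Lemma~\ref{real line mono signs}. Fix $1\le i<j\le N$ and let $(x_i,x_j),(y_i,y_j)\in\Gamma_{i,j}$; choose $p,q\in\Gamma$ with $P_{i,j}(p)=(x_i,x_j)$ and $P_{i,j}(q)=(y_i,y_j)$. By Fact~\ref{mono under shift} the translated set $\Gamma-q$ is again $c_1$-monotone, and it contains the two points $(0,\ldots,0)$ and $p-q$; hence the pair $\{(0,\ldots,0),\,p-q\}$ is $c_1$-monotone. Lemma~\ref{real line mono signs} then forces all coordinates of $p-q$ to share a common sign, so in particular $(x_i-y_i)(x_j-y_j)=(p_i-q_i)(p_j-q_j)\ge 0$. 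Since $(x_i,x_j)$ and $(y_i,y_j)$ were arbitrary in $\Gamma_{i,j}$, the set $\Gamma_{i,j}$ is monotone in $\RR^2$, which is $(iv)$.

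For $(iv)\Rightarrow(iii)$ we invoke the classical one-dimensional fact that every monotone subset of $\RR^2$ is cyclically monotone: such a set extends to the graph of a maximal monotone operator on $\RR$, which---being a one-dimensional maximal monotone operator---is the subdifferential of a proper, convex, lower semicontinuous function, and graphs of subdifferentials are cyclically monotone. To pass from $(iii)$ to $(vi)$ we apply Fact~\ref{Rock} in the two-marginal setting on $X_i\times X_j$ with cost $c_{i,j}=\scal{\cdot}{\cdot}$: fixing $(s_1,\ldots,s_N)\in\Gamma$ and setting $f_{i,j}:=R_{[c_{i,j},\Gamma_{i,j},s_i]}$ as in Definition~\ref{Rock antider}, Fact~\ref{Rock} tells us that $f_{i,j}$ is proper and $c_{i,j}$-convex---hence, in the classical setting, proper, convex and lower semicontinuous---and is a $c_{i,j}$-antiderivative of the mapping whose graph is $\Gamma_{i,j}$; that is, $\Gamma_{i,j}\subseteq\gra(\partial f_{i,j})$, which is $(vi)$.

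Finally, $(vi)\Rightarrow(v)$ also yields the displayed formula. Let proper, convex, lower semicontinuous $f_{i,j}:\RR\to\RX$ be given with $\Gamma_{i,j}\subseteq\gra(\partial f_{i,j})=\gra(\partial_{c_{i,j}}f_{i,j})$; in particular each $\Gamma_{i,j}$ is $c_{i,j}$-cyclically monotone, so Theorem~\ref{main abstract result} applies, and its proof uses only the inclusions $\Gamma_{i,j}\subseteq\gra(\partial_{c_{i,j}}f_{i,j})$---not the particular choice of $c_{i,j}$-antiderivative. Hence the functions $u_i$ of~\eqref{c-spliting u} form a $c$-splitting tuple of $\Gamma$, and since $f_{i,j}^{c_{i,j}}=f_{i,j}^{*}$ in the classical setting these coincide with the $u_i$ of~\eqref{u real line}. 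Each $u_i$ is convex and lower semicontinuous, being a finite sum of the convex, lower semicontinuous functions $f_{i,k}$ for $k>i$ and of the Fenchel conjugates $f_{k,i}^{*}$ for $k<i$; it is proper because, for $(s_1,\ldots,s_N)\in\Gamma$, the inclusions $\Gamma_{i,j}\subseteq\gra(\partial f_{i,j})$ together with the Fenchel--Young equality~\eqref{equality in YF} give $s_i\in\dom f_{i,k}$ for $k>i$ and $s_i\in\dom f_{k,i}^{*}$ for $k<i$, whence $u_i(s_i)<+\infty$, while $u_i>-\infty$ since each summand is. This closes the cycle and proves the last assertion. The step demanding the most care is this last one---matching the hypotheses of Theorem~\ref{main abstract result} to the \emph{given} $f_{i,j}$ and checking that the resulting $u_i$ are genuinely proper, convex, and lower semicontinuous---while the only conceptually new input is $(ii)\Rightarrow(iv)$, which is short once Lemma~\ref{real line mono signs} is available.
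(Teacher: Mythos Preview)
Your proof is correct and follows essentially the same route as the paper's: the same trivial implications, the same use of Fact~\ref{Rock} and Theorem~\ref{main abstract result}, and---most importantly---the identical argument for $(ii)\Rightarrow(iv)$ via Fact~\ref{mono under shift} and Lemma~\ref{real line mono signs}. Your cycle $(i)\Rightarrow(ii)\Rightarrow(iv)\Rightarrow(iii)\Rightarrow(vi)\Rightarrow(v)\Rightarrow(i)$ is a slightly tidier packaging than the paper's enumeration of equivalences, and your explicit verification that the $u_i$ in~\eqref{u real line} are proper, convex and lower semicontinuous is a welcome addition that the paper leaves implicit.
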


\begin{proof}
The equivalence (iii)~$\Leftrightarrow$~(iv) of monotonicity and
cyclic monotonicity in the two-marginal case on the real line is
well known (see, for example, \cite[Theorem 22.18]{BC2011}). The
equivalence (iii)~$\Leftrightarrow$~(vi) follows from
Fact~\ref{Rock}. The implication (v)~$\Rightarrow$~(i) is a
consequence of Fact~\ref{splitting implies cyclic}. The
implications (iii)~$\Rightarrow$~(i) and (iii)~$\Rightarrow$~(v)
via (vi) combined with~\eqref{u real line} is a consequence of
Theorem~\ref{main abstract result}. (i)~$\Rightarrow$~(ii) is
trivial. Thus, in order to complete the proof it is enough to
prove the implication (ii)~$\Rightarrow$~(iv). To this end let
$1\leq k<j\leq N$ and let $(x_k,x_j),(y_k,y_j)\in\Gamma_{kj}$. We
need to prove that $(x_k-y_k)(x_j-y_j)\geq 0$. Since $(x_k,x_j),(y_k,y_j)\in\Gamma_{kj}$, there exist $x=(x_1,\ldots,x_k,\ldots,x_j,\ldots,x_N)\in\Gamma$ and $y=(y_1,\ldots,y_k,\ldots,y_j,\ldots,y_N)\in\Gamma$. By combining our assumption that $\Gamma$ is $c$-monotone with Proposition~\ref{mono under shift} we see that $\Gamma-y$ is $c$-monotone, that is, $x-y$ is $c$-monotonically related to $(0,\ldots,0)$. Finally, we invoke Lemma~\ref{real line mono signs} with $t_i=x_i-y_i$ for each $1\leq i\leq N$ in order to conclude that $t_k$ and $t_j$ have the same sign, that is, 
$(x_k-y_k)(x_j-y_j) = t_k t_j \geq 0$.
\end{proof}

In the following example we discuss the class of all (according to Theorem~\ref{mono on the real line}) $c_1$-monotone (continuous with onto projections on the axis) curves in $\RR^N$. Our discussion can be generalized; however, for clearness of our presentation, we impose a continuity assumption.

\begin{example}
For each $1\leq i\leq N$ let $\alpha_i:\RR\to\RR$ be a continuous, strictly increasing and onto function with $\alpha_i(0)=0$. We consider the curve $\Gamma$ in $\RR^N$ defined by
$$
\Gamma=\Big\{\big(\alpha_1(t),\ldots,\alpha_N(t)\big)\ \Big|\ t\in\RR\Big\}.
$$
Then for each $1\leq i< j\leq N$ the set $\Gamma_{i,j}=\Big\{\big(\alpha_i(t),\alpha_j(t)\big)\ |\ t\in\RR \Big\}$ is clearly a monotone set and $\Gamma=\bigcap_{i<j}P_{i,j}^{-1}(\Gamma_{i,j})$. Consequently, $\Gamma$ is a $c$-monotone set where $c=c_1$. We define $f_{i,j}:\RR\to\RR$ by
\begin{equation}\label{curves f}
f_{i,j}(x_i)=\int_0^{x_i}\alpha_j\big(\alpha^{-1}_i(t)\big)dt.
\end{equation}
Then $f_{i,j}$ is convex, differentiable and
$$
\gra(\partial f_{i,j})=\gra(f')=\Big\{\big(x_i,\alpha_j(\alpha^{-1}_i(x_i)\big)\ \Big|\ x_i\in\RR\Big\}=\Gamma_{i,j}.
$$
Furthermore,
\begin{equation}\label{curves f^*}
f_{i,j}^*(x_j)=\int_0^{x_j}\alpha_i\big(\alpha^{-1}_j(t)\big)dt =
f_{j,i}(x_j).
\end{equation}
Thus, after plugging~\eqref{curves f} and~\eqref{curves f^*} into~\eqref{u real line}, we arrive at
\begin{equation}\label{curves}
u_i(x_i)=\int_0^{x_i} \bigg(\sum_{k\neq i} \alpha_k\big(\alpha^{-1}_i(t)\big)\bigg)dt.
\end{equation}
In summary, since \eqref{main furthermore} holds, by recalling Definition~\ref{splitting def}, Theorem~\ref{main abstract result} and
Theorem~\ref{mono on the real line}, 
we conclude that 
\begin{equation}\label{curve inequality}
\sum_{1\leq i<j\leq N} x_ix_j \leq \sum_{i=1}^N\int_0^{x_i} \bigg(\sum_{k\neq i} \alpha_k\big(\alpha^{-1}_i(t)\big)\bigg)dt
\ \ \ \ \ \ \ \ \ \ \ \ \ \forall (x_1,\ldots,x_N)\in\RR^N
\end{equation}
and that equality in~\eqref{curve inequality} holds if and only if $x_j=\alpha_j\big(\alpha_i^{-1}(x_i)\big)$ for every $1\leq i<j\leq N$.
In the case $N=2$, we set $g=\alpha_2\circ\alpha_1^{-1}$, $a=x_1$ and $b=x_2$. Then the latter reduces to the well-known version of Young's inequality
$$
ab\leq\int_0^ag(t)dt+\int_0^b g^{-1}(t)dt\ \ \ \ \ \ \ \ \ \ \ \ \ \ \ \ \ \forall a,b
$$
with equality if and only if $b=g(a)$.
\end{example}

We conclude with a demonstration of the computational advantages of our approach by elaborating on one of the earliest examples in the literature.

\begin{example}
\label{ex:KS}
In~\cite{KS}, Knott and Smith considered the setting: Set $N=3$, 
$X_i=\RR$ for $1\leq i\leq 3$, and $\Gamma=\big\{(t,t^3,t^5)\ |\
t\in\RR\big\}$. Then $t=\phi(w)$ was defined to be the inverse function of $t+t^3+t^5=w$ and also the following functions were defined
\begin{align*}
\alpha(v)&=\int_0^v\phi(w)dw,\\
\\
\beta(v)&=\int_0^v\phi^3(w)dw,\\
\\
\gamma(v)&=\int_0^v\phi^5(w)dw.
\end{align*}
It was then concluded that 
\begin{equation}\label{Knott Smith inq}
\tfrac{1}{2}|x_1+x_2+x_3|^2\leq\alpha^* (x_1)+\beta^*
(x_2)+\gamma^* (x_3),
\end{equation}
with equality for $(x_1,x_2,x_3)\in\Gamma$. We now address this example using our approach and construct $\alpha^*,\beta^*,\gamma^*$ explicitly. Using our notation, we set $\alpha_1(t)=t,\ \alpha_2(t)=t^3$ and $\alpha_3(t)=t^5$. Then by combining $\alpha_1^{-1}(t)=t,\ \alpha_2^{-1}(t)=t^{\frac{1}{3}},\ \alpha_3^{-1}(t)=t^{\frac{1}{5}}$ with~\eqref{curves} we conclude that the functions
\begin{align*}
u_1(x_1)&=\int_0^{x_1}\Big(\alpha_2\big(\alpha_1^{-1}(t)\big)+\alpha_3\big(\alpha_1^{-1}(t)\big)\Big)dt=
\int_0^{x_1}\big(t^3+t^5\big)dt
=\tfrac{1}{4}{x_1^4}+\tfrac{1}{6}{x_1^6},\\
\\
u_2(x_2)&=\int_0^{x_2}\Big(\alpha_1\big(\alpha_2^{-1}(t)\big)+\alpha_3\big(\alpha_2^{-1}(t)\big)\Big)dt
=\int_0^{x_2}\big(t^{\frac{1}{3}}+t^{\frac{5}{3}}\big)dt
=\tfrac{3}{4}x_2^{{4}/{3}}+\tfrac{3}{8}x_2^{{8}/{3}},\\
\\
u_3(x_3)&=\int_0^{x_3}\Big(\alpha_1\big(\alpha_3^{-1}(t)\big)+\alpha_2\big(\alpha_3^{-1}(t)\big)\Big)dt
=\int_0^{x_3}\big(t^{\frac{1}{5}}+t^{\frac{3}{5}}\big)dt
=\tfrac{5}{6}x_3^{{6}/{5}}+\tfrac{5}{8}x_3^{{8}/{5}}
\end{align*}
satisfy 
$$
x_1x_2+x_2x_3+x_3x_1\leq u_1(x_1)+u_2(x_2)+u_3(x_3)\ \ \ \ \ \ \
\forall(x_1,x_2,x_3)\in\RR^3,
$$
with equality if and only if $(x_1,x_2,x_3)\in\Gamma$. It follows that
$$
\tfrac{1}{2}|x_1+x_2+x_3|^2\leq
(u_1+q)(x_1)+(u_2+q)(x_2)+(u_3+q)(x_3)\ \ \ \ \ \ \
\forall(x_1,x_2,x_3)\in\RR^3, 
$$
with equality if and only if $(x_1,x_2,x_3)\in\Gamma$. Finally, relating our discussion back to the discussion of Knott and Smith, it is not hard to verify that $u_1+q=\alpha^*,\ u_2+q=\beta^*$ and $u_3+q=\gamma^*$. Furthermore, the case of equality in~\eqref{Knott Smith inq} is now characterized. 
Finally, in Figure~\ref{fig:KS},
we depict $\Gamma$ and its three planar projections.
\begin{figure}[h!]
\begin{center}
\includegraphics[width=0.7\columnwidth]{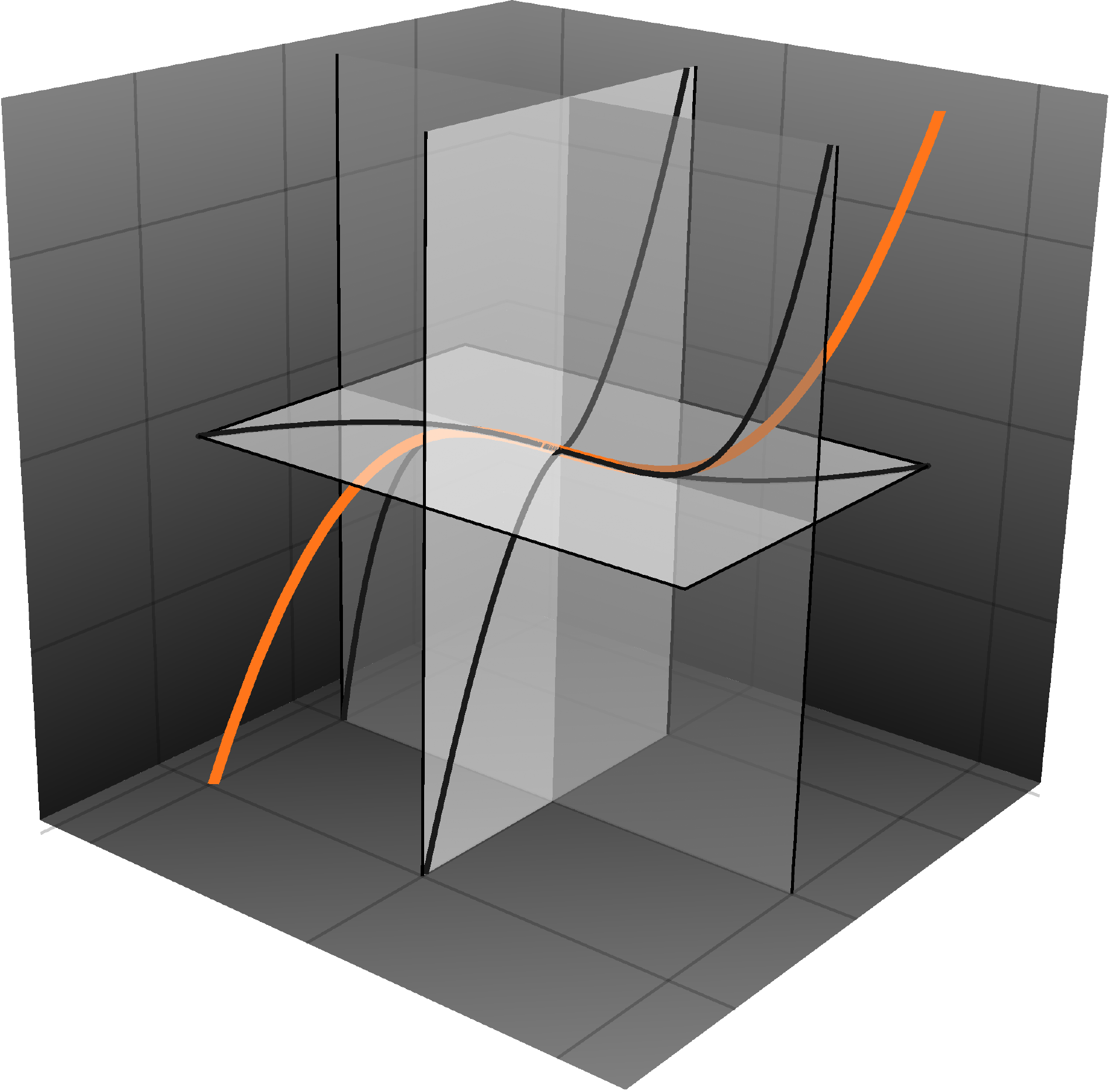}
\caption{The curve $\Gamma$ of Example~\ref{ex:KS} together
with its planar projections $\Gamma_{1,2}$, $\Gamma_{2,3}$, and
$\Gamma_{1,3}$.}
 \label{fig:KS}
\end{center}
\end{figure}

\end{example}

\newpage

\section*{Acknowledgments}
Sedi Bartz was supported by a postdoctoral fellowship of the Pacific
Institute for the Mathematical Sciences and by NSERC grants of Heinz
Bauschke and Xianfu Wang. Heinz Bauschke was partially supported
by the Canada Research Chair program and by the Natural Sciences
and Engineering Research Council of Canada. 
Xianfu Wang was partially supported by the Natural
Sciences and Engineering Research Council of Canada.


\end{document}